\begin{document}

\newtheorem{thm}{Theorem}
\newtheorem*{unnumberedthm}{Theorem}
\newtheorem{lemma}[thm]{Lemma}
\newtheorem{claim}[thm]{Claim}
\newtheorem{corollary}[thm]{Corollary}
\newtheorem{prop}[thm]{Proposition} 
\newtheorem*{definition}{Definition}
\newtheorem{question}[thm]{Open Question}
\newtheorem{conj}[thm]{Conjecture}
\newtheorem{prob}{Problem}

\newcounter{app}\numberwithin{app}{section}
\newtheorem{applemma}[app]{Lemma}
\newtheorem{appcorollary}[app]{Corollary}
\def\vol {{\mathrm{vol\,}}}
\def\squareforqed{\hbox{\rlap{$\sqcap$}$\sqcup$}}
\def\qed{\ifmmode\squareforqed\else{\unskip\nobreak\hfil
\penalty50\hskip1em\null\nobreak\hfil\squareforqed
\parfillskip=0pt\finalhyphendemerits=0\endgraf}\fi}

\def\cA{{\mathcal A}} \def\cB{{\mathcal B}} \def\cC{{\mathcal C}}
\def\cD{{\mathcal D}} \def\cE{{\mathcal E}} \def\cF{{\mathcal F}}
\def\cG{{\mathcal G}} \def\cH{{\mathcal H}} \def\cI{{\mathcal I}}
\def\cJ{{\mathcal J}} \def\cK{{\mathcal K}} \def\cL{{\mathcal L}}
\def\cM{{\mathcal M}} \def\cN{{\mathcal N}} \def\cO{{\mathcal O}}
\def\cP{{\mathcal P}} \def\cQ{{\mathcal Q}} \def\cR{{\mathcal R}}
\def\cS{{\mathcal S}} \def\cT{{\mathcal T}} \def\cU{{\mathcal U}}
\def\cV{{\mathcal V}} \def\cW{{\mathcal W}} \def\cX{{\mathcal X}}
\def\cY{{\mathcal Y}} \def\cZ{{\mathcal Z}}

\def\NmQR{N(m;Q,R)} \def\VmQR{\cV(m;Q,R)}

\def\Xm{\cX_m}

\def \A {{\mathbb A}} \def \B {{\mathbb A}} \def \C {{\mathbb C}} \def \F
{{\mathbb F}} \def \G {{\mathbb G}} \def \L {{\mathbb L}} \def \K {{\mathbb K}}
\def \Q {{\mathbb Q}} \def \R {{\mathbb R}} \def \Z {{\mathbb Z}} \def
\fS{\mathfrak S}


\def\\{\cr} \def\({\left(} \def\){\right)}
\def\fl#1{\left\lfloor#1\right\rfloor} \def\rf#1{\left\lceil#1\right\rceil}

\def\Tr{{\mathrm{Tr}}} \def\Im{{\mathrm{Im}}}

\def \bFp {\overline \F_p}

\newcommand{\pfrac}[2]{{\left(\frac{#1}{#2}\right)}}

\def \Prob{{\mathrm {}}} \def\e{\mathbf{e}} \def\ep{{\mathbf{\,e}}_p}
\def\epp{{\mathbf{\,e}}_{p^2}} \def\em{{\mathbf{\,e}}_m}

\def\Res{\mathrm{Res}} \def\Orb{\mathrm{Orb}}

\def\vec#1{\mathbf{#1}} \def\flp#1{{\left\langle#1\right\rangle}_p}

\def\mand{\qquad\mbox{and}\qquad}

\newcommand{\commA}[1]{\marginpar{%
    \vskip-\baselineskip 
    \itshape\hrule\smallskip\begin{color}{red}#1\end{color}\par\smallskip\hrule}}

\newcommand{\commO}[1]{\marginpar{%
    \vskip-\baselineskip 
    \itshape\hrule\smallskip\begin{color}{blue}#1\end{color}\par\smallskip\hrule}}

\title[Perpendicular bisectors and pinned distances]{On distinct perpendicular
bisectors and pinned distances in finite fields}

 \author[B. Hanson, B. Lund and O. Roche-Newton]{Brandon Hanson, Ben Lund and
 Oliver Roche-Newton}

\address{Department of Mathematics, University of Toronto, Ontario, Canada} \email{bhanson@math.utoronto.ca}

\address{Department of Computer Science, Rutgers, The State University of New Jersey, NJ} \email{lund.ben@gmail.com}

\address{Johann Radon Institute for Computational and Applied Mathematics,
Austrian Academy of Sciences, 4040
 Linz, Austria} \email{o.rochenewton@gmail.com}

\date{\today}

\newcommand{\NN}[0]{\mathbb N}
\newcommand{\FF}[0]{\mathbb F}
\newcommand{\EE}[0]{\mathbb E}
\newcommand{\PP}[0]{\mathbb P}
\newcommand{\uu}[0]{\textbf{\textit u}}
\newcommand{\vv}[0]{\textbf{\textit v}}
\renewcommand{\aa}[0]{\textbf{\textit a}}
\newcommand{\bb}[0]{\textbf{\textit b}}
\newcommand{\cc}[0]{\textbf{\textit c}}
\newcommand{\dd}[0]{\textbf{\textit d}}
\newcommand{\xx}[0]{\textbf{\textit x}}
\newcommand{\yy}[0]{\textbf{\textit y}}
\newcommand{\zz}[0]{\textbf{\textit z}}
\newcommand{\ww}[0]{\textbf{\textit w}}
\newcommand{\ii}[0]{\textbf{\textit i}}
\newcommand{\jj}[0]{\textbf{\textit j}}
\newcommand{\mm}[0]{\textbf{\textit m}}
\renewcommand{\dd}[0]{\textbf{\textit d}}
\newcommand{\eps}[0]{\varepsilon}
\newcommand{\mat}[4]{\left( \begin{array}{cc} #1 & #2 \\ #3 & #4 \end{array} \right)}
\renewcommand{\vec}[2]{\left( \begin{array}{c} #1 \\ #2 \end{array} \right)}
\newcommand{\one}[0]{\textbf{1}}

\date{\today}

\begin{abstract}
Given a set of points  $P \subset \F_q^2$ such that $|P|\geq
q^{4/3}$, we establish that for a positive proportion of points $\aa \in P$, we have
 $$|\{\|\aa-\bb\|:\bb \in P\}|\gg q,$$ where $\|\aa-\bb\|$ is the distance between points $\aa$ and $\bb$.
This improves a result of Chapman et al. \cite{Ch}.

A key ingredient of our proof also shows that, if $|P|\geq q^{3/2}$, then the number $B$ of distinct lines which arise as
the perpendicular bisector of two points in $P$ satisfies $B\gg q^2$.
\end{abstract}


\keywords{finite fields, perpendicular bisectors, pinned distances, isosceles triangles, rigid motions, expander mixing lemma}

\maketitle

\section{Introduction}

Given a set of points $P$, it is natural to construct a set of lines by
connecting distinct pairs of elements from $P$. Roughly speaking, one expects
that this set of lines determined by $P$ should be large, unless the point set
is highly collinear. A seminal result of this kind was Beck's Theorem
\cite{Beck}, which established that there exist absolute constants $c,k>0$ such
that if $P \subset \mathbb{R}^2$, then either $P$ determines $c|P|^2$ distinct
lines, or there exists a single line supporting $k|P|$ points from $P$.
Different versions of Beck's Theorem for the finite field\footnote{Whenever we refer
to a field $\F_q$ in this paper, it is assumed that the field has characteristic
strictly greater than $2$.} setting, in which we
begin by considering a point set $P \subset \F_q^2$, have been proven in \cite{Alon,HR,Jones}.

In this paper, we consider an alternative take on Beck's theorem, in which we
look at the set of perpendicular bisectors determined by a point set $P \subset
\F_q^2$. 
For a vector $\xx = (x_1,x_2) \in \F_q^2$, we define $\|\xx \| = x_1^2+x_2^2$
and call $\|\xx-\yy\|$ the \textit{distance} between $\xx$ and $\yy$. Though it
is not actually a metric since it takes values in $\FF_q$, this notion
of distance shares a number of purely algebraic properties with the Euclidean
distance. Given two distinct points $\aa,\bb \in \F_q^2$, we define the
\textit{perpendicular bisector of $\aa$ and $\bb$} to be the set $$B(\aa,\bb):=\{\cc \in \F_q^2: \|\cc-\aa\|=\|\cc-\bb\|\}.$$
 It is a simple calculation to check
that $B(\aa,\bb)$ is indeed a line in $\F_q^2$. Now, define $B(P)$ to be the set
of all perpendicular bisectors determined by pairs of points from $P$ with non-zero distance. That is,
$$B(P):=\{B(\aa,\bb):\aa,\bb \in P, \|\aa-\bb\| \neq 0\}.$$
Again, we expect that $|B(P)|$ will be large, provided that $P$ is not of some
degenerate form. One of these degenerate cases occurs when the
point set $P$ consists of many points on the same line. If all of the
points lie on the same line, it is possible that $|B(P)|$ could be as small as
$2|P|-3$. Another degenerate case occurs when the points of $P$ are
equidistributed on a circle. However, these constructions only seem to work for
relatively small point sets. Indeed the points on a line or circle are
contained in a one-dimensional subset of the plane. In this paper we prove that
provided a point set is sufficiently large, a positive
proportion of all lines arise as perpendicular bisectors:

\begin{thm} \label{main1} If $P\subset \F_q^2$ such that $|P|\geq q^{3/2}$, then
$$|B(P)|\gg q^2.$$
\end{thm}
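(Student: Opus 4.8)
The plan is to bound $|B(P)|$ from below by an energy (Cauchy--Schwarz) argument. For a line $\ell$ arising as a perpendicular bisector, let $r(\ell)$ be the number of ordered pairs $(\aa,\bb)\in P\times P$ with $\|\aa-\bb\|\neq 0$ and $B(\aa,\bb)=\ell$, and set $N=\sum_\ell r(\ell)$. The only excluded pairs are the diagonal and the pairs at zero distance; since there are at most $2(q-1)$ non-zero isotropic vectors, we have $N\geq |P|^2-|P|-2q|P|\gg |P|^2$, using $|P|\geq q^{3/2}\gg q$. By Cauchy--Schwarz,
$$N^2=\Bigl(\sum_\ell r(\ell)\Bigr)^2\leq |B(P)|\sum_\ell r(\ell)^2,$$
so it suffices to establish the energy bound $\sum_\ell r(\ell)^2\ll |P|^4/q^2$, which immediately yields $|B(P)|\geq N^2/\sum_\ell r(\ell)^2\gg q^2$.

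Next I would reinterpret the energy geometrically. Each perpendicular bisector $\ell$ is the axis of a unique reflection $\sigma_\ell$, and $\aa,\bb$ are interchanged by $\sigma_\ell$ exactly when $B(\aa,\bb)=\ell$; hence $r(\ell)=|P\cap\sigma_\ell(P)|$ and
$$\sum_\ell r(\ell)^2=\#\bigl\{(\aa,\bb,\cc,\dd)\in P^4:\ \sigma_{\aa,\bb}(\cc)=\dd\bigr\},$$
where $\sigma_{\aa,\bb}$ is the reflection in $B(\aa,\bb)$. Because a reflection is an involutive isometry for $\|\cdot\|$, every such quadruple satisfies the two congruence conditions $\|\aa-\cc\|=\|\bb-\dd\|$ and $\|\aa-\dd\|=\|\bb-\cc\|$; conversely these two conditions confine $\dd$ to one of the (at most two) intersection points of the circles of the appropriate radii about $\aa$ and $\bb$, one of which is exactly $\sigma_{\aa,\bb}(\cc)$. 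Thus, up to a factor $2$ and negligible degenerate terms, $\sum_\ell r(\ell)^2$ is controlled by the number of \emph{doubly congruent} quadruples
$$E':=\#\bigl\{(\aa,\bb,\cc,\dd)\in P^4:\ \|\aa-\cc\|=\|\bb-\dd\|,\ \|\aa-\dd\|=\|\bb-\cc\|\bigr\},$$
a count of $4$-cycles $\aa\to\cc\to\bb\to\dd\to\aa$ whose opposite edges carry equal distance. The target becomes $E'\ll|P|^4/q^2$.

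I would obtain this bound spectrally. For non-zero $t$, let $A_t$ be the adjacency matrix of the distance graph on $\F_q^2$ in which $\xx\sim\yy$ iff $\|\xx-\yy\|=t$; this graph is regular of degree $q+O(1)$ on $q^2$ vertices, and a Gauss/Kloosterman sum computation shows all of its non-principal eigenvalues are $O(q^{1/2})$. Writing $E'$ as a sum over the two distances $s,t$ of an inner product of the indicator vector of $P$ against the operators $A_t$ and $A_s$, the expander mixing lemma replaces each distance constraint by its expected density $1/q$ at the cost of an error governed by $q^{1/2}$. The main term assembles to $|P|^4/q^2$, and it dominates the error precisely when $|P|^2/q\gg q^{1/2}|P|$, that is, when $|P|\gg q^{3/2}$, which is the hypothesis. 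Equivalently, one may run the same estimate on the Cayley graph of the group of rigid motions of $\F_q^2$, for which the two congruence conditions collapse into a single group-membership condition and the spectral gap of the Cayley graph is what is exploited.

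The main obstacle is exactly this spectral step for $E'$: unlike a single pinned-distance count, the two distance constraints are coupled through all four points (they encode the involution condition), so one cannot finish with a single application of expander mixing but must control a product of two distance operators while keeping the error below the main term $|P|^4/q^2$. Checking that the accumulated error is beaten at the threshold $|P|\geq q^{3/2}$ — and separately verifying that the diagonal contribution ($\cc=\dd$, of size $O(N)=O(|P|^2)$), the zero-distance pairs, and any over-concentrated axis directions all contribute only lower-order terms — is where the real work lies.
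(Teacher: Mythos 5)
Your opening paragraph is exactly the paper's proof of Theorem \ref{main1}: the weights $r(\ell)$, the lower bound $\sum_\ell r(\ell)\gg|P|^2$ after discarding zero-distance pairs, and Cauchy--Schwarz reduce everything to the bisector energy bound, which is the paper's Theorem \ref{energybound} (the paper's bound is $|P|^4/q^2+q|P|^2$, which agrees with your target in the regime $|P|\geq q^{3/2}$). The gap is in how you propose to prove that energy bound. Your reduction replaces the condition ``$B(\aa,\bb)=B(\cc,\dd)$'' by the two distance equalities $\|\aa-\cc\|=\|\bb-\dd\|$, $\|\aa-\dd\|=\|\bb-\cc\|$, and this step is fatally lossy: the doubly congruent quadruples split (generically) into two families, the \emph{antiparallelograms}, which are the reflection-type quadruples you want, and the \emph{parallelograms} $\aa+\bb=\cc+\dd$, which are not bisector quadruples at all but do satisfy both equalities (since $\aa-\cc=\dd-\bb$ and $\aa-\dd=\cc-\bb$). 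Consequently $E'$ is at least the additive energy of $P$ in the group $(\F_q^2,+)$, and your target $E'\ll|P|^4/q^2$ is simply false in the relevant range. Concretely, take $P=A\times\F_q$ with $A\subset\F_q$ an arithmetic progression of size $q^{1/2}$, so $|P|=q^{3/2}$. The number of quadruples with $\aa+\bb=\cc+\dd$ is $E_+(A)\cdot E_+(\F_q)\gg|A|^3q^3=q^{9/2}$, and removing the zero-distance and degenerate quadruples costs only $O(q^4)$; yet $|P|^4/q^2=q^4$. So $E'\gg q^{9/2}\gg|P|^4/q^2$. No spectral argument can prove a false inequality; a posteriori this shows that the implicit ``doubly congruent'' graph on pairs (which contains the disjoint cliques $\{(\cc,\dd):\cc+\dd=\text{const}\}$ of size $q^2$) has non-principal eigenvalues of order at least $q^{3/2}$, so the expander mixing error swamps the main term. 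Your flagged ``main obstacle'' is therefore not a technical difficulty to be overcome but an insurmountable one: the two distance constraints genuinely do not remember the reflection.

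The paper's proof of Theorem \ref{energybound} avoids exactly this by never discarding the reflection structure. By Corollary \ref{th:distanceDecomposition}, same-bisector pairs $(\xx,\zz)$, $(\yy,\ww)$ satisfy $\|\xx-\yy\|=\|\zz-\ww\|=d$, so the energy is partitioned by this common distance $d$; for each $d\neq 0$ one builds a graph on \emph{all} pairs of $\F_q^2$ at distance $d$ whose edges are defined by equality of bisectors (equivalently, by being swapped by a common reflection), not by distance conditions. The entries of $A^2$ are then computed exactly by counting decompositions of rigid motions into two reflections (Corollary \ref{th:rotationsFromReflections2}): rotation-related vertex pairs have $q\mp 1$ common neighbours, translation-related pairs only $q$ or $0$ --- this is precisely where the parallelogram/translation quadruples are tamed, appearing only as a bounded perturbation rather than as edges. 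Gershgorin then gives a second-eigenvalue bound of $2(q-1)$, expander mixing yields $|Q_d'|\leq|\Pi_d|^2/q+2(q-1)|\Pi_d|$ (Lemma \ref{th:fixedDistanceEnergy}), and summing over $d$ using the known bound $\sum_d|\Pi_d|^2\ll|P|^4/q+q^2|P|^2$ (Lemma \ref{th:distanceEnergyBound}) finishes. If you tried to repair your argument by subtracting the parallelogram contribution from $E'$ and bounding only the antiparallelogram count, the natural graph encoding that count is again the ``related by a reflection'' graph at a fixed distance --- i.e., you would be led back to the paper's construction.
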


In concurrent work, Lund, Sheffer and de Zeeuw proved an analog to Theorem \ref{main1} for finite sets of points in the real plane \cite{LSZ14}.

Theorem \ref{main1} was partly motivated by an application for the ``pinned
distance problem'' in $\F_q^2$. The aim of this problem is to show that, for
any given point set, there always exists a point (or indeed many points) from
the set which determines many distances with the rest of the point set. One of
the key ingredients used to prove Theorem \ref{main1} can be combined with an
incidence theorem for multisets of points and lines in order to establish the
following result:

\begin{thm} \label{main2} Let $P \subset \F_q^2$ such that $|P| \geq q^{4/3}$.
Then there exists a subset $P' \subset P$ such that $|P'| \gg |P|$ and for all
$\aa \in {P'}$ we have the estimate
\begin{equation}
|\{\|\aa-\bb\|:\bb \in P\}|\gg q.
\label{pinned}
\end{equation}
\end{thm}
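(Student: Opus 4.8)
The plan is to convert the pinned-distance statement into an energy estimate and then extract a positive proportion of good points by averaging. For $\aa \in P$ and $t \in \F_q$, set $r_\aa(t) = |\{\bb \in P : \|\aa-\bb\| = t\}|$, so that $\sum_t r_\aa(t) = |P|$ and the left-hand side of (\ref{pinned}) is exactly $|\{t : r_\aa(t) > 0\}|$. Cauchy--Schwarz gives
$$|\{t : r_\aa(t) > 0\}| \geq \frac{\left(\sum_t r_\aa(t)\right)^2}{\sum_t r_\aa(t)^2} = \frac{|P|^2}{E(\aa)}, \qquad E(\aa) := \sum_t r_\aa(t)^2.$$
Thus it suffices to prove that $E(\aa) \ll |P|^2/q$ for a positive proportion of $\aa \in P$. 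I would obtain this by bounding the total energy $\sum_{\aa \in P} E(\aa) \ll |P|^3/q$ and then applying Markov's inequality: the number of $\aa$ with $E(\aa)$ exceeding a suitable constant multiple of $|P|^2/q$ is at most $|P|/2$, so the complementary set $P'$ satisfies $|P'| \gg |P|$ and (\ref{pinned}) on $P'$.

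The second step is to read off the total energy geometrically. Expanding the square,
$$\sum_{\aa \in P} E(\aa) = |\{(\aa,\bb,\cc) \in P^3 : \|\aa-\bb\| = \|\aa-\cc\|\}| = |P|^2 + I,$$
where $I$ counts the ordered isosceles triples with $\bb \neq \cc$. For each such triple the apex $\aa$ lies on the perpendicular bisector $B(\bb,\cc)$, so $I$ is precisely the number of incidences between the point set $P$ and the multiset of lines $\cL = \{B(\bb,\cc) : \bb,\cc \in P,\ \bb \neq \cc\}$, each line counted with multiplicity equal to the number of pairs producing it. Since $|P| \geq q^{4/3} \gg q$, the diagonal term $|P|^2$ is already $\ll |P|^3/q$, so everything reduces to bounding $I$.

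To estimate $I$ I would apply the incidence theorem for multisets of points and lines. Writing $m(\ell)$ for the multiplicity of $\ell$ in $\cL$, the spectral (expander-mixing) bound yields
$$I \ll \frac{|P|\sum_\ell m(\ell)}{q} + q^{1/2}|P|^{1/2}\left(\sum_\ell m(\ell)^2\right)^{1/2} \ll \frac{|P|^3}{q} + q^{1/2}|P|^{1/2}Q(P)^{1/2},$$
using $\sum_\ell m(\ell) \leq |P|^2$, where $Q(P) := \sum_\ell m(\ell)^2$ is the bisector energy counting quadruples $(\bb,\cc,\bb',\cc') \in P^4$ with $B(\bb,\cc) = B(\bb',\cc')$. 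Here I would invoke the bisector-energy estimate underlying Theorem \ref{main1}, which I expect to take the form $Q(P) \ll |P|^4/q^2 + |P|^2 q$. A direct check shows that for $|P| \geq q^{4/3}$ both resulting error contributions are dominated by $|P|^3/q$: the term $q^{1/2}|P|^{1/2}(|P|^4/q^2)^{1/2} = |P|^{5/2}/q^{1/2}$ beats $|P|^3/q$ exactly when $|P| \gg q$, while $q^{1/2}|P|^{1/2}(|P|^2 q)^{1/2} = q|P|^{3/2}$ beats $|P|^3/q$ exactly when $|P| \geq q^{4/3}$. Hence $\sum_\aa E(\aa) \ll |P|^3/q$, and the averaging argument of the first step finishes the proof.

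The main obstacle is the control of the bisector energy $Q(P)$ together with its correct interaction with the spectral error term; this is the quantity at the heart of Theorem \ref{main1}, and the threshold $|P| \geq q^{4/3}$ is dictated precisely by the demand that the $|P|^2 q$ portion of $Q(P)$ not overwhelm the main incidence term. A secondary technical nuisance is the treatment of isotropic directions: pairs $\bb,\cc$ with $\|\bb-\cc\| = 0$ do not determine honest bisector lines, and points $\bb$ with $\|\aa-\bb\| = 0$ lie on the null lines through $\aa$. One must verify that these degenerate contributions are of lower order, which should follow since each point lies on at most two null lines, so their total effect is absorbed into the error terms already accounted for.
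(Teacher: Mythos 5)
Your skeleton is essentially the paper's own: count isosceles triples as incidences between $P$ and the weighted multiset of bisectors, control the weights' second moment by the bisector energy bound (Theorem \ref{energybound}) inside the weighted Szemer\'edi--Trotter bound (Lemma \ref{th:ST}), then extract a positive proportion of good pins by averaging. (Your final step --- per-point Cauchy--Schwarz plus Markov on $E(\aa)$ --- is a clean variant of the paper's global Cauchy--Schwarz over circles followed by thresholding, and is fine.) But there is a genuine gap in your handling of zero distances, and it bites exactly in the new range $q^{4/3}\le |P| < q^{3/2}$ that the theorem is meant to cover. Your multiset $\cL$ contains $B(\bb,\cc)$ for \emph{all} pairs $\bb\neq\cc$, including pairs with $\|\bb-\cc\|=0$, whose bisectors are isotropic lines. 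The energy estimate you invoke, $\sum_\ell m(\ell)^2\ll |P|^4/q^2+q|P|^2$, is Theorem \ref{energybound}, which is stated only for quadruples whose defining pairs have non-zero distance; the paper notes explicitly that the bound is \emph{false} without that exclusion: if $q\equiv 1\bmod 4$ and $P$ is a union of $|P|/q$ isotropic lines, then the bisector of any two points on one of those lines is the line itself, producing $\sim|P|q^3$ quadruples, which swamps $|P|^4/q^2+q|P|^2$ whenever $|P|\ll q^{3/2}$.

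The same example refutes your intermediate claim $\sum_{\aa\in P}E(\aa)\ll|P|^3/q$ and your closing assertion that the degenerate contributions are lower order because ``each point lies on at most two null lines.'' For that $P$, every $\aa$ has $r_\aa(0)\ge q$, hence $E(\aa)\ge q^2$ and $\sum_\aa E(\aa)\ge |P|q^2$, which exceeds $|P|^3/q$ precisely when $|P|<q^{3/2}$; equivalently, the number of degenerate triples $(\aa,\bb,\cc)$ with $\bb\neq\cc$ and $\|\bb-\cc\|=0$ is of order $|P|q^2$, not lower order. So the proof as written fails on $q^{4/3}\le|P|<q^{3/2}$ (when $q\equiv1\bmod 4$). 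The repair is to discard the zero distance class throughout: work with $E^*(\aa)=\sum_{t\neq 0}r_\aa(t)^2$, note that $\sum_{t\neq 0}r_\aa(t)=|P|-r_\aa(0)\ge|P|-2q\ge|P|/2$, so Cauchy--Schwarz still gives a pinned-distance count $\gg|P|^2/E^*(\aa)$, and then --- crucially --- show that the off-diagonal triples counted by $\sum_\aa E^*(\aa)$ (those with $\|\aa-\bb\|=\|\aa-\cc\|\neq 0$ and $\bb\neq\cc$) automatically satisfy $\|\bb-\cc\|\neq 0$, so that only non-isotropic bisectors arise and Theorem \ref{energybound} legitimately applies. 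That last fact is not free: it is exactly the paper's Lemma \ref{circleint} (a circle of non-zero radius meets an isotropic line in at most one point). With those modifications your argument goes through and matches the paper's proof in all essentials.
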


To give some context for Theorem \ref{main2}, we refer to the work of Chapman et
al. (see \cite[Theorem 2.3]{Ch}), who proved that for a point set $P\subset
\F_q^d$ with $|P| \geq q^{\frac{d+1}{2}}$, there exists a subset $P' \subset P$
such that $|P'| \gg |P|$ and for all $\xx \in P'$, \eqref{pinned} holds. Theorem \ref{main2}
gives an improvement on this result in the case when $d=2$.

The pinned distance problem is a variant on the classical Erd\H{o}s ``distinct
distance problem'', and a trivial observation is that a lower bound for pinned
distances implies a lower bound for the number of distinct distances determined
by a set. In the real plane, the distinct distance problem was almost completely
resolved by Guth and Katz \cite{GK}, whilst the harder pinned distance problem
remains wide open. The sequence of works in \cite{BHIPR, IR} established that a set
of $q^{4/3}$ points in $\mathbb{F}_q^2$ determines a positive proportion of the $q$ distinct
distances. With Theorem \ref{main2}, the threshold for the number of points in the plane that
will necessarily determine a positive proportion of all pinned distances now matches
that known for distances.

Both Theorem \ref{main1} and Theorem \ref{main2} are deduced from a bound on the
number of pairs of pairs of points that determine the same line as a bisector.
For a point set $P \subseteq \mathbb{F}_q^2$, define the set
\[Q(P) := \{(\xx,\yy, \zz, \ww) \in P^4 : B(\xx,\zz) = B(\yy,\ww), \| \xx - \zz \| \neq 0 \}.\]

When the point set $P$ is obvious from the context, we will sometimes drop the
argument and simply write $Q$ instead of $Q(P)$.

\begin{thm} \label{energybound}
For $P \subset \F_q^2$,
 \[|Q(P)| \ll \frac{|P|^4}{q^2}+q|P|^2.\]
\end{thm}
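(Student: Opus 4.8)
The plan is to pass from the geometric condition $B(\aa,\zz)=B(\yy,\ww)$ to the algebra of reflections, and then estimate the resulting quadratic count with the expander mixing lemma. The key observation is that a line $\ell$ whose normal direction is non-isotropic is the fixed-point set of a unique reflection $\sigma_\ell$, an orientation-reversing isometry of the form $\|\cdot\|$, and that $\sigma_\ell(\aa)=\zz$ holds exactly when $\ell=B(\aa,\zz)$ and $\|\aa-\zz\|\neq 0$. Thus $B(\aa,\zz)=B(\yy,\ww)=\ell$ is equivalent to the \emph{single} reflection $\sigma_\ell$ sending $\aa\mapsto\zz$ and $\yy\mapsto\ww$. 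Writing $\cR$ for the set of reflections across non-isotropic lines (so $|\cR|\asymp q^2$) and $r(\sigma)=|P\cap\sigma(P)|$, the valid bisector lines are in bijection with $\cR$, and discarding the fixed points on each mirror axis gives
\[|Q(P)| \le \sum_{\sigma\in\cR} r(\sigma)^2.\]
This reduces the theorem to an upper bound on $\sum_{\sigma} r(\sigma)^2$.

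Before estimating, I would confirm the heuristic so that both terms are explained. Since a pair $(\aa,\zz)\in P^2$ with $\|\aa-\zz\|\neq0$ determines $\sigma_\ell$ uniquely, one has $\sum_{\sigma} r(\sigma)=|P|^2+O(q|P|)$, so over the $\asymp q^2$ reflections the average of $r$ is $\asymp|P|^2/q^2$; perfect equidistribution would give $\sum_\sigma r(\sigma)^2\asymp q^2\cdot(|P|^2/q^2)^2=|P|^4/q^2$, which is the main term. The task is to bound the deviation from the average by $q|P|^2$. That the second term is genuinely necessary is seen by taking $P$ inside a single line: then $|P|\le q$, the common bisectors are the perpendiculars through midpoints, and $|Q(P)|$ is the additive energy of $P$, which can be as large as $|P|^3\le q|P|^2$.

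To control $\sum_\sigma r(\sigma)^2$ I would set up the expander mixing lemma on the \emph{reflection graph on pairs}. Let the vertex set be $\F_q^2\times\F_q^2$, and join $(\aa,\yy)$ to $(\uu,\vv)$ whenever some $\sigma\in\cR$ satisfies $\sigma(\aa)=\uu$ and $\sigma(\yy)=\vv$; using the explicit form $\sigma(\xx)=\xx-2(\mm\cdot\xx-t)\mm$ with $\|\mm\|=1$ makes this action transparent. Because each $\sigma$ is an involution the graph is symmetric, and since each $\sigma$ contributes one to every row sum it is exactly $|\cR|$-regular, with adjacency operator $A=\sum_{\sigma\in\cR}\sigma^{\otimes 2}$. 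Taking $S=P\times P$, the edge count within $S$ is precisely
\[e(S,S)=\sum_{\aa,\yy\in P}\#\{\sigma\in\cR:\sigma(\aa)\in P,\ \sigma(\yy)\in P\}=\sum_{\sigma\in\cR} r(\sigma)^2.\]
The expander mixing lemma then yields $\sum_\sigma r(\sigma)^2\le \tfrac{|\cR|}{q^4}|P|^4+\lambda|P|^2\ll |P|^4/q^2+\lambda|P|^2$, where $\lambda$ is the second eigenvalue of $A$, and the theorem follows once $\lambda\ll q$.

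The main obstacle is exactly this spectral gap $\lambda\ll q$. The top eigenvalue of $A$ is $|\cR|\asymp q^2$ (on the constant function), so one must show the remaining eigenvalues are smaller by a factor of $q$; equivalently, one must bound the operator norm of $\sum_{\sigma\in\cR}\sigma^{\otimes 2}$ on each non-trivial isotypic component of the action of the group of rigid motions on $\F_q^2\otimes\F_q^2$. This is governed by the representation theory of $O_2(\F_q)\ltimes\F_q^2$ and ultimately by the cancellation in the associated Kloosterman/Salié-type character sums, which is where the hypothesis $\mathrm{char}\,\F_q>2$ and the non-isotropy of the reflection axes enter. I expect establishing this $O(q)$ bound to be the technical heart of the argument; the degenerate configurations (collinear point sets, pairs at isotropic distance, and the diagonal $\aa=\yy$) require no separate treatment for the upper bound, since they are already absorbed into $e(S,S)$ and hence into the error term $\lambda|P|^2$.
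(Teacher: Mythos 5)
Your opening reduction is sound, and it is in fact the same viewpoint the paper takes: since $\|\xx-\zz\|\neq 0$ forces the common bisector to be non-isotropic, each quadruple of $Q(P)$ is produced by the unique reflection fixing that line, so $|Q(P)|\le\sum_{\sigma\in\cR}r(\sigma)^2$ with $r(\sigma)=|P\cap\sigma(P)|$. The fatal problem is the spectral claim you defer to the end: for the graph you define, the bound $\lambda\ll q$ is not merely hard, it is \emph{false}. Every reflection is an isometry, so every edge of your graph joins $(\aa,\yy)$ to a pair $(\sigma(\aa),\sigma(\yy))$ at the same mutual distance; hence $A=\sum_{\sigma\in\cR}\sigma^{\otimes 2}$ is block-diagonal with respect to the partition of $\F_q^2\times\F_q^2$ into the $q$ level sets $V_d=\{(\aa,\yy):\|\aa-\yy\|=d\}$, and every block has all row sums equal to $|\cR|$. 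A disjoint union of $\delta$-regular (multi)graphs has eigenvalue $\delta$ with multiplicity at least the number of parts, so there are eigenvectors orthogonal to the global all-ones vector with eigenvalue $|\cR|\asymp q^2$; no representation theory or Kloosterman-type cancellation can change this exact computation. Plugging the true value $\lambda\asymp q^2$ into Lemma \ref{th:expanderMixing} gives only $\sum_\sigma r(\sigma)^2\ll |P|^4/q^2+q^2|P|^2$, which is weaker than the theorem by a factor of $q$ in the second term, and useless for the applications (Theorem \ref{main1} would then need $|P|\gg q^2$).

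Repairing this forces you into the paper's actual argument, which needs two further ingredients absent from your outline. By Corollary \ref{th:distanceDecomposition} every quadruple in question satisfies $\|\xx-\yy\|=\|\zz-\ww\|=d$ for some $d$, so one runs the mixing lemma \emph{separately inside each level set} $V_d$ with $d\neq 0$, taking $S=\Pi_d=\{(\xx,\yy)\in P^2:\|\xx-\yy\|=d\}$. The first ingredient is a spectral gap within $V_d$: the paper proves $\lambda\le 2(q-1)$ by noting that entries of $A^2$ count pairs of reflections composing to a given rotation or translation (Corollary \ref{th:rotationsFromReflections2}), so that $A^2=cJ+c'I+E$ with every row sum of $E$ of size $O(q^2)$, and then applying the Gershgorin circle theorem (Lemma \ref{th:Gershgorin}) --- an elementary counting device, not character-sum cancellation. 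The second ingredient is what your ``perfect equidistribution'' heuristic silently assumes: after the per-level bound $|Q'_d|\le |\Pi_d|^2/q+2(q-1)|\Pi_d|$ (Lemma \ref{th:fixedDistanceEnergy}), the main terms sum to $\frac{1}{q}\sum_d|\Pi_d|^2$, and this distance energy is \emph{not} automatically $O(|P|^4/q)$ --- your own collinear example makes it large. One needs the nontrivial bound $\sum_d|\Pi_d|^2\ll |P|^4/q+q^2|P|^2$ (Lemma \ref{th:distanceEnergyBound}, imported from \cite{BHIPR}). Finally, contrary to your closing remark, the isotropic degenerations do require separate treatment: the counting lemmas behind the spectral gap need $d\neq 0$, so the paper first bounds $Q'$ (quadruples with $\|\xx-\yy\|\neq 0$) and then disposes of the quadruples missed by all four symmetrized versions of $Q'$ --- those whose four cross distances all vanish --- by a direct argument showing there are only $O(|P|^2)$ of them.
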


It is interesting to note that, while we don't believe that Theorems \ref{main1}
or \ref{main2} are tight, Theorem \ref{energybound} is tight up to the implicit
constants.
A randomly chosen set of points shows that it is possible to construct a set $P
\subset \F_q^2$ for which $|Q| \gg |P|^4/q^2$, and hence Theorem
\ref{energybound} is tight when $|P| \gg q^{3/2}$.
Suppose now that $P$ consists of the union of $|P|/q$ parallel lines, each
containing $q$ points. Now let $l$ be a line perpendicular to these lines. Then
$l$ is the bisector of $|P|$ pairs of points in $P$ - each line contains $q$
pairs of points with $l$ as their bisector. In particular, the number of pairs
$(\xx,\zz)$ and $(\yy,\ww)$ each with $l$ as their bisector is $|P|^2$. There
are $q$ lines $l$ which are perpendicular to the lines defining $P$, so $|Q|
\geq |P|^2q$; this shows that Theorem \ref{energybound} is tight when $|P| \ll
q^{3/2}$.

Note that in the definitions of the sets $B(P)$ and $Q(P)$, we make the point of
excluding pairs of points whose distance is zero.
These can arise if the element $-1\in\FF_q$ has a square root in $\mathbb{F}_q$,
which happens exactly when $q$ is congruent to $1$ modulo $4$.
The possibility of points with zero distances present some extra technical
difficulties in the forthcoming analysis, and it is often necessary to consider
separately the cases when $q$ is congruent to either $1$ or $3$ modulo $4$. In
fact, Theorem \ref{energybound} would not be true if quadruples arising from
such zero distances were included in the set $Q$. It can be calculated that if
$P$ is the union of  $|P|/q$ isotropic lines, then we would have $|P|q^3$
quadruples since any four points $\xx,\yy,\zz,\ww$ on an isotropic line satisfy
$B(\xx,\zz)=B(\yy,\zz)$. The arguments in this paper when $q=3 \mod 4$ are
slightly more straightforward, since zero distances are not an issue in this
case.

The rest of the paper is structured as follows. In section \ref{Prelims} we go
over preliminary results needed. We cover some simple plane geometry over finite
fields in subsection \ref{sec:finitePlaneGeometry}. We will quote a number of
elementary results from finite plane geometry, though we leave the proofs to an
appendix in the interest of brevity. If the reader is familiar with plane
geometry, these results will be quite believable. In subsection \ref{sec:EML},
we record a version of the Expander Mixing Lemma, also proved in an appendix,
and recall a few facts we will need from linear algebra. We prove Theorem \ref{energybound}, and consequently Theorem \ref{main1}, in section \ref{sec:bisectors}. 
 Finally, in section \ref{sec:pinnedDistances}, we record
a version of the finite field Szemer\'{e}di-Trotter theorem for multisets, and
combine this incidence result with Theorem \ref{energybound} in order to prove
Theorem \ref{main2}.

The methods used are a combination of elementary geometry and spectral graph theory. It is likely that Fourier analysis (i.e.
the use of exponential sums) would succeed just as well in proving our theorems,
but we have instead chosen to work with graphs, which maintains the combinatorial
spirit of the problem.

\subsubsection*{Notation}
We recall that the notations $U \ll V$ and  $V \gg U$ are both equivalent to the
statement that the inequality $|U| \le c V$ holds with some
constant $c> 0$.

\section{Preliminaries}\label{Prelims}

\subsection{Results from Finite Plane Geometry}\label{sec:finitePlaneGeometry}
In this section we establish a few facts from finite planar geometry. For proofs of the facts claimed in this section we refer to Appendix
\ref{sec:GeometryAppendix}.

Recall the notion of distance introduced earlier; when
$\xx=(x_1,x_2),\yy=(y_1,y_2)\in\FF_q^2$ we will write \[\xx\cdot \yy=\xx^t\yy\]
for the standard inner product and \[\|\xx\|=\xx\cdot\xx.\] This is not a
distance in usual sense since the elements of $\FF_q$ are not sensibly ordered,
but many of properties of a norm persist in an algebraic fashion.
The set of points a fixed distance from a given point is a circle
\[C_r(\uu)=\{\xx\in\FF_q^2:\|\xx-\uu\|=r\}.\]
We call $\uu$ the centre of the circle and $r$ the radius. We remark here that
when $q=1\mod 4$ then there is an element $i\in\FF_q$ satisfying $i^2=-1$ and so
there are non-zero points on the circle of zero radius. Recall that the bisector of two
distinct points $\xx$ and $\yy$, is denoted as \[B(\xx,\yy):=\{\cc \in \F_q^2:
\|\cc-\xx\|=\|\cc-\yy\|\}.\] Equivalently, the bisector of $\xx$ and $\yy$ is
the line passing through the the midpoint $\frac{1}{2}(\xx+\yy)$ with direction
orthogonal to $\xx-\yy$.

We will use the symmetries of the plane to understand the distribution of
bisectors.

\begin{definition}[Rotations, Reflections and Translations] A matrix of the form
\[\mat{a}{-b}{b}{a},\ a^2+b^2=1\] is called a rotation matrix, and a matrix of
the form \[\mat{a}{b}{b}{-a},\ a^2+b^2=1\] is called a reflection matrix. If
$\uu\in\FF_q^2$ and $R$ is a rotation matrix, then a rotation about $\uu$ by $R$
is an affine map of the form \[\cR(\vv)=\cR_{R,\uu}(\vv)=R(\vv-\uu)+\uu.\] If
$\uu\in\FF_q^2$ and $S$ is a reflection matrix, then a reflection about $\uu$ by $S$
is an affine map of the form \[\cS(\vv)=\cS_{S,\uu}(\vv)=S(\vv-\uu)+\uu.\]
A translation by $\uu$ is an affine map of the form
\[\cT(\vv)=\cT_{\uu}(\vv)=\vv+\uu.\]
\end{definition}

For our purposes we need the connection between reflections and bisectors. However, it is not the case that all lines arise as the fixed line of a reflection. Indeed, we need to account for the possibility of elements with vanishing norm. A line $l$ is called \textit{isotropic} if it is of the form \[l=\{t\cdot (1,\pm
i)+\uu:t\in \FF_q\}\] where $i^2=-1$. With these definitions, we are now ready to state four lemmas which will be used towards proving the results on distinct bisectors and pinned distances.

\begin{lemma}\label{th:distanceDecomposition}
If $\xx,\yy,\zz,\ww \in \FF_q^2$ are such that $B=B(\xx,\zz)=B(\yy,\ww)$ is
non-isotropic then $\|\xx-\yy\|=\|\zz-\ww\|$.
\end{lemma}

\begin{lemma}
\label{th:reflections}
There are $q+1$ lines passing through any $\uu\in \FF_q^2$. 
\begin{enumerate}
	\item If $q=1\mod 4$ then two of the lines are isotropic and $q-1$ are
	non-isotropic. It follows that there are $q-1$ rotations and
	reflections about any point $\uu$.
	\item When $q=3\mod 4$ all lines are non-isotropic. It follows that there are $q+1$ rotations and
	reflections about any point $\uu$.
\end{enumerate}
\end{lemma}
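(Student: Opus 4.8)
The plan is to treat the two halves of the statement in turn: first count all lines through a fixed point $\uu$, then isolate the isotropic ones, and finally transfer these counts to rotations and reflections using Lemma \ref{th:LinesAndReflections} together with the matrix descriptions from the definitions. For the line count, I would write an arbitrary line through $\uu$ as $\{\uu + t\dd : t \in \FF_q\}$ for a nonzero direction $\dd$, and note that two directions give the same line exactly when one is a scalar multiple of the other. Thus the lines through $\uu$ are in bijection with the projective line over $\FF_q$, of which there are $(q^2-1)/(q-1) = q+1$. This establishes the first sentence.

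Next I would identify the isotropic lines among these, which by definition are those whose direction $\dd=(d_1,d_2)$ satisfies $\|\dd\| = d_1^2 + d_2^2 = 0$. When $q \equiv 1 \pmod 4$ the element $i$ exists and one factors $d_1^2 + d_2^2 = (d_1+id_2)(d_1-id_2)$, so up to scaling there are exactly the two directions $(1,\pm i)$; hence two isotropic lines and $q-1$ non-isotropic ones. When $q \equiv 3 \pmod 4$, $-1$ is not a square, so $d_1^2+d_2^2=0$ forces $\dd = \mathbf{0}$; there are then no isotropic directions and all $q+1$ lines are non-isotropic.

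To pass from lines to rigid motions, I would invoke Lemma \ref{th:LinesAndReflections}. Since a reflection $\cS$ about $\uu$ fixes $\uu$, its fixed line is a non-isotropic line through $\uu$; conversely every non-isotropic line through $\uu$ is the fixed line of a unique reflection, which (being invariant under the choice of centre along the line) may be taken about $\uu$. This gives a bijection between reflections about $\uu$ and non-isotropic lines through $\uu$, so there are $q-1$ (resp. $q+1$) of them. For rotations, I would observe that the rotation matrices $\mat{a}{-b}{b}{a}$ and reflection matrices $\mat{a}{b}{b}{-a}$ are both parametrized by the solution set of $a^2+b^2=1$, so they are equinumerous; since a rotation (resp. reflection) about the fixed point $\uu$ is determined by its matrix, the number of rotations about $\uu$ equals the number of reflections about $\uu$, yielding the stated counts.

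The one step genuinely requiring attention is the case split modulo $4$, which controls both the solubility of $d_1^2+d_2^2=0$ and, through the bijection above, the final rotation and reflection counts; everything else is an immediate consequence of the earlier lemmas, so no substantial obstacle arises beyond this elementary dichotomy. As a consistency check one may note that the reflection (and rotation) matrices are precisely the points of the circle $C_1(\mathbf{0})$, and a direct count of $\{(a,b):a^2+b^2=1\}$ gives $q-\eta(-1)$, agreeing with $q-1$ and $q+1$ in the two cases; but the bijection with non-isotropic lines lets me avoid computing this circle directly.
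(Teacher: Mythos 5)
Your proof is correct and follows essentially the same route as the paper: count the $q+1$ lines through $\uu$ projectively, identify the isotropic directions $(1,\pm i)$ (two when $q\equiv 1 \bmod 4$, none when $q \equiv 3 \bmod 4$), put reflections about $\uu$ in bijection with non-isotropic lines through $\uu$ via Lemma \ref{th:LinesAndReflections}, and transfer the count to rotations via the bijection between rotation and reflection matrices. Your treatment is somewhat more explicit than the paper's (the factorization $d_1^2+d_2^2=(d_1+id_2)(d_1-id_2)$ and the matrix parametrization are spelled out rather than asserted), and you correctly avoid the circularity of invoking the circle-count lemma, which in the paper is proved only afterwards using this very result.
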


\begin{lemma}\label{th:circles}
Suppose $\uu\in\FF_q^2$ and $r\in \FF_q$. Then we have:
\begin{enumerate}
  \item $|C_r(\uu)|=q-1$ if $r\neq 0$ and $|C_0(\uu)|=2q-1$ whenever $q=1\mod 4$;
  \item $|C_r(\uu)|=q+1$ if $r\neq 0$ and $|C_0(\uu)|=1$ whenever $q=3\mod 4$.
\end{enumerate} It follows
that the number of ordered pairs $(\xx,\yy)\in\FF_q^2\times\FF_q^2$ with
$\|\xx-\yy\|=r$ is:
\begin{enumerate}
  \item $q^2(q-1)$ if $r\neq 0$ and $q^2(2q-1)$ if $r=0$ whenever $q=1\mod 4$;
  \item $q^2(q+1)$ if $r\neq 0$ and $q^2$ if $r=0$ whenever $q=3\mod 4$.
\end{enumerate}
\end{lemma}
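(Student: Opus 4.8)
The plan is to reduce to circles centred at the origin and then exploit the simply transitive action of the rotation group that was set up earlier. Since $\|\xx-\uu\|$ depends only on $\xx-\uu$, translating by $-\uu$ gives $|C_r(\uu)|=|C_r(0)|$, so it suffices to count solutions of $x_1^2+x_2^2=r$. Once the circle counts are in hand, the final tally of ordered pairs follows immediately: for each of the $|C_r(0)|$ vectors $\dd$ with $\|\dd\|=r$ there are exactly $q^2$ pairs $(\yy+\dd,\yy)$, so the number of ordered pairs $(\xx,\yy)$ with $\|\xx-\yy\|=r$ is $q^2|C_r(0)|$, which is what the stated consequence asserts.

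For $r\neq 0$ I would argue as follows. First, every $r\in\F_q$ is a sum of two squares: the sets $\{x_1^2:x_1\in\F_q\}$ and $\{r-x_2^2:x_2\in\F_q\}$ each have size $(q+1)/2$, so their sizes sum to more than $q$ and they must intersect; hence $C_r(0)\neq\emptyset$. Now fix any $\xx_0\in C_r(0)$, and note $\xx_0\neq 0$ since $\|\xx_0\|=r\neq 0$. The earlier lemma guaranteeing a unique rotation about $\uu$ carrying any point of $C_r(\uu)$ to any other shows, on the one hand, that the rotations about $0$ act transitively on $C_r(0)$, and, on the other hand (taking the target point equal to $\xx_0$), that the identity is the only rotation fixing $\xx_0$. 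Thus the group of rotations about $0$ acts simply transitively on $C_r(0)$, whence $|C_r(0)|$ equals the number of such rotations. By Lemma~\ref{th:reflections} this number is $q-1$ when $q\equiv 1\bmod 4$ and $q+1$ when $q\equiv 3\bmod 4$, which is exactly the claim.

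It remains to treat the degenerate circle $r=0$, where the rotation argument fails because $0\in C_0(0)$ is itself a fixed point of every rotation about $0$. Here I would count directly and split on $q\bmod 4$. When $q\equiv 3\bmod 4$, $-1$ is a non-square, so $x_1^2=-x_2^2$ forces $x_2=0$ and hence $x_1=0$, giving $|C_0(0)|=1$. When $q\equiv 1\bmod 4$ there is $i$ with $i^2=-1$, and the substitution $(u,v)=(x_1+ix_2,\,x_1-ix_2)$ is an invertible linear change of coordinates (its matrix has determinant $-2i\neq 0$) that turns $x_1^2+x_2^2$ into $uv$; the equation $uv=0$ has $2q-1$ solutions, namely the two isotropic lines through the origin, which meet only at $0$, so $|C_0(0)|=2q-1$.

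The computations are all elementary, and the genuine subtlety is the zero circle together with the broader need to keep the two residue classes $q\equiv 1,3\bmod 4$ apart: the existence of isotropic directions when $q\equiv 1\bmod 4$ is precisely what makes $C_0(0)$ large and forces the case distinction that recurs throughout the paper. The non-emptiness step for $r\neq 0$ is the one place where a short pigeonhole count, rather than a direct appeal to the earlier geometric lemmas, is required; everything else is bookkeeping built on top of Lemma~\ref{th:reflections}.
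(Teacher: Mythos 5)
Your proof is correct and takes essentially the same route as the paper: translate to the origin, count the $r=0$ circle by describing the isotropic points directly, and for $r\neq 0$ identify $C_r(0)$ with the set of rotations about the origin via the unique-rotation lemma together with Lemma~\ref{th:reflections}, then multiply by $q^2$ for the ordered-pair count. The one point where you go beyond the paper is your pigeonhole argument that $C_r(0)\neq\emptyset$ when $r\neq 0$; the paper's proof simply says ``fix any $\vv\in C_r(0)$'' and silently assumes non-emptiness, so your write-up closes a small gap that the published argument leaves implicit.
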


\begin{lemma}\label{th:rotationsFromReflections2}
Suppose $\xx,\yy,\zz,\ww\in\FF_q^2$ are such that $(\xx,\yy)\neq (\zz,\ww)$ and
\[\|\xx-\yy\|=\|\zz-\ww\| \neq 0.\] 
If $\xx-\yy\neq \zz-\ww$, then there are
$q-1$ pairs of reflections $(\cR_1,\cR_2)$ with $\cR_1(\xx)=\cR_2(\zz)$ and
$\cR_1(\yy)=\cR_2(\ww)$ when $q=1\mod 4$ and $q+1$ such pairs when $q=3\mod 4$.
If $\xx-\yy=\zz-\ww$ and $\| \xx-\zz \| \neq 0$, then there are $q$ such pairs of reflections.
If $\xx-\yy=\zz-\ww$ and $\| \xx-\zz \| = 0$, then there are no such pairs of reflections.
\end{lemma}

\subsection{Some Linear Algebra and the Expander Mixing Lemma}\label{sec:EML}
Here, we recall some simple facts we need from linear algebra. The main tool we use
in our arguments is the Expander Mixing Lemma, a standard result in
spectral graph theory \cite{mixing}. In fact we need a
weighted variant of the lemma for the results coming in Sections
\ref{sec:bisectors} and \ref{sec:pinnedDistances}.

Suppose $G$ is a $\delta$-regular graph, meaning each vertex in $G$ is adjacent
to $\delta$ other vertices. If $A$ is the adjacency matrix of $G$, note that the
largest eigenvalue of $A$ is $\delta$; the eigenvector corresponding to this
eigenvalue is the all-$1$s vector.
We let $L^2(V)$ be the set of complex valued functions on the vertex set $V$
endowed with the inner product \[\langle f,g\rangle=\sum_{v\in V}f(v)\bar g(v)\]
and norm \[\|f\|^2=\langle f,f\rangle.\] The matrix $A$ acts on $L^2(V)$ by the
formula \[Af(v)=\sum_{\{u,v\}\in E} f(u).\] Finally, we let $\EE$ denote the
expectation:
\[\EE(f)=\frac{1}{|V|}\sum_{v\in V}f(v).\]

We recall here the following versions of the Plancherel and Parseval identities.

\begin{lemma}\label{Fourier}
Let $B$ be an orthonormal basis for $L^2(V)$. Then we have 
\[\sum_{v\in V}|f(v)|^2=\sum_{b\in B}|\langle
  f,b\rangle|^2\] and
\[\sum_{v\in V}f(v)\bar{g}(v)=\sum_{b\in B}\langle
  f,b\rangle\langle b,g\rangle.\]
\end{lemma}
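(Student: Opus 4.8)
The plan is to prove Parseval's identity first and then obtain Plancherel's identity as the special case $g=f$. The key observation is that, since $B$ is an orthonormal basis of $L^2(V)$, every $f\in L^2(V)$ admits a unique expansion $f=\sum_{b\in B} c_b\, b$, and the whole proof amounts to identifying the coefficients $c_b$ and then substituting back into the inner product.

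First I would pin down the coefficients. Taking the inner product of the expansion $f=\sum_{b\in B} c_b\, b$ with a fixed basis element $b'\in B$ and invoking orthonormality, so that $\langle b,b'\rangle$ equals $1$ when $b=b'$ and $0$ otherwise, gives $\langle f,b'\rangle=\sum_{b\in B} c_b\langle b,b'\rangle=c_{b'}$. Hence $f=\sum_{b\in B}\langle f,b\rangle\, b$, the familiar Fourier-type expansion in the basis $B$, and likewise $g=\sum_{b'\in B}\langle g,b'\rangle\, b'$.

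Next I would substitute both expansions into the inner product and use sesquilinearity together with orthonormality to collapse the resulting double sum to a single sum:
\[\sum_{v\in V} f(v)\overline{g(v)}=\langle f,g\rangle=\sum_{b,b'\in B}\langle f,b\rangle\,\overline{\langle g,b'\rangle}\,\langle b,b'\rangle=\sum_{b\in B}\langle f,b\rangle\,\overline{\langle g,b\rangle}.\]
Since the inner product $\langle f,g\rangle=\sum_{v\in V}f(v)\overline{g(v)}$ satisfies $\overline{\langle g,b\rangle}=\langle b,g\rangle$ by conjugate symmetry, the right-hand side equals $\sum_{b\in B}\langle f,b\rangle\langle b,g\rangle$, which is exactly Parseval's identity in the stated form. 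Setting $g=f$ then gives $\sum_{v\in V}|f(v)|^2=\langle f,f\rangle=\sum_{b\in B}\langle f,b\rangle\overline{\langle f,b\rangle}=\sum_{b\in B}|\langle f,b\rangle|^2$, which is Plancherel's identity.

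There is no genuine obstacle here: this is a routine fact about finite-dimensional inner product spaces. The only point that demands care is bookkeeping of the complex conjugates. One must consistently track which argument of $\langle\cdot,\cdot\rangle$ is conjugated (the second, under the convention $\langle f,g\rangle=\sum_{v}f(v)\overline{g(v)}$ fixed in the excerpt), so that the cross terms indeed vanish and the final identity emerges in the asymmetric form $\langle f,b\rangle\langle b,g\rangle$ rather than with a misplaced conjugation.
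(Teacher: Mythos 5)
Your proof is correct: the expansion $f=\sum_{b\in B}\langle f,b\rangle b$ followed by sesquilinearity and orthonormality is the standard argument, and your bookkeeping of conjugates under the paper's convention $\langle f,g\rangle=\sum_{v}f(v)\bar{g}(v)$ is accurate, including the step $\overline{\langle g,b\rangle}=\langle b,g\rangle$ that yields the asymmetric form $\langle f,b\rangle\langle b,g\rangle$. The paper itself states this lemma without proof, treating it as a standard fact about the finite-dimensional inner product space $L^2(V)$, so your write-up simply supplies the routine verification the authors omitted.
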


We have the following version of the expander mixing lemma. The proof is postponed to Appendix \ref{app2}.
\begin{lemma}[Expander Mixing Lemma]\label{th:expanderMixing}
Let $G = (V,E)$ be a $\delta$-regular graph with $|V|=n$, and let $A$ be the
adjacency matrix for $G$. Suppose the absolute values of all but the largest
eigenvalue of $A$ are bounded by $\lambda$.
Suppose $f,g\in L^2(V)$, then \[\left|\langle
f,Ag\rangle-\delta n\EE(f)\EE(g)\right|\leq \lambda\|f\|\|g\|.\] In particular,
let $S,T \subseteq V$, and denote by $E(S,T)$ the number of edges between $S$ and $T$.
Then,
\[\left | E(S,T) - \delta|S| |T| / n \right | \leq \lambda \sqrt{|S| |T|}.\]
\end{lemma}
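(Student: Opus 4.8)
The plan is to prove the eigenvalue inequality first and then derive the edge-counting statement as a direct corollary by making the right choice of $f$ and $g$. To prove the main inequality $|\langle f, Ag\rangle - \delta n \EE(f)\EE(g)| \leq \lambda \|f\| \|g\|$, I would work in an orthonormal eigenbasis of the symmetric matrix $A$. Since $A$ is real and symmetric, it admits an orthonormal basis $B = \{b_0, b_1, \dots, b_{n-1}\}$ of eigenvectors, with eigenvalues $\mu_0 = \delta \geq \mu_1, \dots, \mu_{n-1}$ (in absolute value bounded by $\lambda$), and with $b_0 = \frac{1}{\sqrt{n}}\one$ the normalized all-ones vector corresponding to the top eigenvalue $\delta$.

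First I would expand both $f$ and $Ag$ in this eigenbasis. Using Parseval's identity from the previous lemma, I would write
\begin{equation}
\langle f, Ag\rangle = \sum_{b \in B} \langle f, b\rangle \langle b, Ag\rangle = \sum_{j=0}^{n-1} \mu_j \langle f, b_j\rangle \overline{\langle g, b_j\rangle},
\label{eq:eigenexpand}
\end{equation}
where I have used that $Ab_j = \mu_j b_j$ and self-adjointness of $A$. The key observation is that the $j=0$ term is exactly the ``expected'' main term: since $b_0 = \frac{1}{\sqrt n}\one$, we have $\langle f, b_0\rangle = \frac{1}{\sqrt n}\sum_v f(v) = \sqrt n\, \EE(f)$ and similarly for $g$, so that $\mu_0 \langle f, b_0\rangle \overline{\langle g, b_0\rangle} = \delta n \EE(f)\overline{\EE(g)} = \delta n \EE(f)\EE(g)$. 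Subtracting this term and applying the triangle inequality together with the bound $|\mu_j| \leq \lambda$ for $j \geq 1$ gives
\begin{equation}
\left|\langle f, Ag\rangle - \delta n \EE(f)\EE(g)\right| \leq \lambda \sum_{j=1}^{n-1} |\langle f, b_j\rangle|\,|\langle g, b_j\rangle|.
\label{eq:afterbound}
\end{equation}
I would then finish by applying the Cauchy--Schwarz inequality to the sum on the right, bounding it by $\left(\sum_{j\geq 1}|\langle f,b_j\rangle|^2\right)^{1/2}\left(\sum_{j\geq 1}|\langle g,b_j\rangle|^2\right)^{1/2}$, and then extending each sum to include $j=0$ and invoking Plancherel's identity to recognize these as $\|f\|$ and $\|g\|$.

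For the ``in particular'' statement, I would specialize to indicator functions, taking $f = \one_S$ and $g = \one_T$. Then $\langle f, Ag\rangle = \langle \one_S, A\one_T\rangle$ counts exactly the edges between $S$ and $T$, i.e.\ equals $E(S,T)$; meanwhile $\EE(\one_S) = |S|/n$, $\EE(\one_T) = |T|/n$, $\|f\| = \sqrt{|S|}$ and $\|g\| = \sqrt{|T|}$. Substituting these into the main inequality yields $|E(S,T) - \delta|S||T|/n| \leq \lambda \sqrt{|S||T|}$ immediately. The only step requiring a moment's care is the matching of $\langle \one_S, A\one_T\rangle$ with the edge count $E(S,T)$ under the paper's convention $Af(v) = \sum_{\{u,v\}\in E} f(u)$; I expect no real obstacle here beyond checking this bookkeeping, since the whole argument is the standard spectral decomposition and the main inequality does the real work.
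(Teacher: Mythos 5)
Your proposal is correct and follows essentially the same route as the paper's proof: expand $\langle f, Ag\rangle$ in an orthonormal eigenbasis via Parseval, peel off the top-eigenvector term as $\delta n\,\EE(f)\EE(g)$, bound the remainder by Cauchy--Schwarz and Plancherel, and specialize to characteristic functions of $S$ and $T$ for the edge-counting statement. (In fact your write-up is slightly more careful than the paper's, which has a typographical slip in its Cauchy--Schwarz step, writing $\langle f,e\rangle$ in both factors.)
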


Finally we will also need the following standard fact from linear algebra.
\begin{lemma}[Gershgorin Circle Theorem]\cite{Brualdi}\label{th:Gershgorin}
Let $A=[A_{ij}]$ be an $n \times n$ matrix, and let $r_i = \sum_{j=1}^n
|a_{ij}|$ be the sum of the absolute values of the $i^{th}$ row of $A$.
Then each eigenvalue of $A$ is contained in at least one of the disks
\[\mathcal{D}_i = \{z:|z - a_{ii}| \leq r_i\}\]
in the complex plane.
\end{lemma}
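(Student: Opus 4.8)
The plan is to prove this by a direct argument from the eigenvalue equation, selecting the coordinate of largest modulus in an eigenvector. This is the classical proof, and it works verbatim with the stated definition of $r_i$ (which here includes the diagonal term $|a_{ii}|$); since the usual Gershgorin radius $\sum_{j \neq i} |a_{ij}|$ is at most $r_i$, including the diagonal only enlarges the disks, so no difficulty arises on that account.

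First I would fix an arbitrary eigenvalue $\lambda$ of $A$ together with a corresponding nonzero eigenvector $x = (x_1, \ldots, x_n)^t$, so that $Ax = \lambda x$. Since $x \neq 0$, I would choose an index $i \in [n]$ for which $|x_i|$ is maximal; in particular $|x_i| > 0$ and $|x_j| \leq |x_i|$ for every $j \in [n]$. This choice is the crux of the whole argument.

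Next I would read off the $i$-th coordinate of the identity $Ax = \lambda x$, namely $\sum_{j \in [n]} a_{ij} x_j = \lambda x_i$, and isolate the diagonal term to get $(\lambda - a_{ii}) x_i = \sum_{j \neq i} a_{ij} x_j$. Taking absolute values and applying the triangle inequality gives
\[
|\lambda - a_{ii}|\,|x_i| \leq \sum_{j \neq i} |a_{ij}|\,|x_j| \leq |x_i| \sum_{j \neq i} |a_{ij}|.
\]
Dividing through by $|x_i| > 0$ yields $|\lambda - a_{ii}| \leq \sum_{j \neq i} |a_{ij}| \leq r_i$, so that $\lambda \in \mathcal{D}_i$. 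As $\lambda$ was an arbitrary eigenvalue, every eigenvalue lies in at least one of the disks $\mathcal{D}_i$, which is the assertion.

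I do not expect a genuine obstacle here: the only step that requires attention is the selection of the maximal coordinate, since it is precisely the inequality $|x_j| \leq |x_i|$ that allows the common factor $|x_i|$ to cancel and leaves a bound independent of the eigenvector. I would also note that this is a standard result quoted from \cite{Brualdi}, so in the interest of self-containment the short argument above serves merely to recall why it holds, and a pointer to the reference would equally suffice.
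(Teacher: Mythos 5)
Your proof is correct and complete: the classical maximal-coordinate argument is exactly the right way to establish Gershgorin's theorem, and you correctly handle the one wrinkle in the paper's statement, namely that the radius $r_i$ here is defined to include the diagonal term $|a_{ii}|$, which only enlarges the disks and so causes no difficulty. Note that the paper itself offers no proof at all for this lemma --- it is quoted directly from the reference \cite{Brualdi} as a standard fact --- so there is no in-paper argument to compare against; your two-paragraph derivation simply supplies the standard justification that the authors delegate to the citation, and as you observe, either the short proof or the pointer to the reference would suffice for the paper's purposes.
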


\section{Distinct perpendicular bisectors}\label{sec:bisectors}

In this section, we will prove Theorems \ref{main1} and \ref{energybound}.
The basic approach is to associate our problem with a graph, and then use the
facts about rigid motions from Section \ref{sec:finitePlaneGeometry} to analyze
the eigenvalues of this graph, so that we can apply the expander mixing lemma.

It will be more convenient to deduce Theorem \ref{energybound} from 
the following similar result:

\begin{lemma} \label{energybound2}
For a point set $P \subseteq \mathbb{F}_q^2$, define the set
\[Q'(P) := \{(\xx,\yy, \zz, \ww) \in P^4 : B(\xx,\zz) = B(\yy,\ww), \| \xx - \yy \| \neq 0 \}.\]
Then
\[|Q'(P)| \ll \frac{|P|^4}{q^2}+q|P|^2.\]
\end{lemma}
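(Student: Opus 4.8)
The plan is to bound $|Q'(P)|$ by the number of quadruples related by a single reflection, organise these by the common distance $\|\xx-\yy\|=r$, and run the expander mixing lemma (Lemma \ref{th:expanderMixing}) on one regular graph per value of $r$.

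First I would show every quadruple counted by $Q'$ comes from a genuine reflection. If $\|\xx-\yy\|\neq 0$, the common bisector $B(\xx,\zz)=B(\yy,\ww)$ cannot be isotropic: were it isotropic, all four points would lie on one isotropic line and we would have $\|\xx-\yy\|=0$. Hence, by Lemma \ref{th:LinesAndReflections}, there is a unique reflection $\cS$ with $\cS(\xx)=\zz$, and the hypothesis $B(\xx,\zz)=B(\yy,\ww)$ then forces $\cS(\yy)=\ww$. Writing $f_\cS=|P\cap\cS(P)|$, each admissible quadruple is encoded by a triple $(\xx,\yy,\cS)$ with $\xx,\yy,\cS(\xx),\cS(\yy)\in P$, so that $|Q'(P)|\le\sum_\cS f_\cS^2$, the sum being over all reflections $\cS$. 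By Corollary \ref{th:distanceDecomposition} such a quadruple has $\|\xx-\yy\|=\|\zz-\ww\|=:r\neq 0$, so I would split the count by $r$. For each $r\neq 0$, set $V_r=\{(\xx,\yy):\|\xx-\yy\|=r\}$ and let $G_r$ be the graph on $V_r$ joining $(\xx,\yy)$ to $(\cS\xx,\cS\yy)$ over all reflections $\cS$. Since a reflection is determined by its action on two points at nonzero distance (Lemma \ref{th:QuadruplesAndRotations}), distinct reflections give distinct neighbours, so $G_r$ is regular of degree $\delta_r=(\#\text{reflections})-1$, the $-1$ removing the loop from the unique reflection fixing the non-isotropic line through $\xx,\yy$. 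By Lemmas \ref{th:reflections} and \ref{th:circles}, $\#\text{reflections}=(q\mp1)q$ and $n_r:=|V_r|=q^2(q\mp1)$, so $\delta_r/n_r=1/q$ exactly. With $P_r=(P\times P)\cap V_r$ and $|P_r|=D_r(P):=\#\{(\xx,\yy)\in P^2:\|\xx-\yy\|=r\}$, this yields $|Q'(P)|\le\sum_{r\neq 0}E_{G_r}(P_r,P_r)+O(q|P|^2)$, the error absorbing the diagonal $\xx=\yy$ terms $\sum_\cS f_\cS$.

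The crux is the spectral gap of $G_r$, and here I would square the adjacency matrix $A_r$. A path of length two composes two reflections into an orientation-preserving isometry, so $(A_r^2)_{(\xx,\yy),(\uu,\vv)}$ is the number of ways to write the unique rotation or translation carrying $(\xx,\yy)$ to $(\uu,\vv)$ as a pair of reflections. By Corollary \ref{th:rotationsFromReflections2} this count is \emph{exactly} $q\mp1$ whenever the two pairs are related by a rotation, exactly $q$ when related by a non-isotropic translation, and $0$ for an isotropic translation, while the diagonal entry is $\delta_r+1\asymp q^2$. Thus I can write $A_r^2=(q\mp1)(J-I)+(\delta_r+1)I+C$, where $J$ is the all-ones matrix and the correction $C$ is supported only on the $O(q^2)$ translation-related off-diagonal entries of each row: the $\asymp q^2$ non-isotropic-translation entries are off by $O(1)$, and the $O(q)$ isotropic-translation entries are off by $O(q)$, so every row sum of $|C|$ is $O(q^2)$. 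The Gershgorin circle theorem (Lemma \ref{th:Gershgorin}) then gives $\|C\|=O(q^2)$. Since $G_r$ is connected (diameter two) and $J$ is rank one, every eigenvalue of $A_r^2$ other than the principal one $\asymp\delta_r^2$ is $O(q^2)$, whence the nontrivial eigenvalues of $A_r$ satisfy $\lambda_r\ll q$.

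Finally I would assemble the estimate via Lemma \ref{th:expanderMixing}: for each $r\neq 0$,
\[ E_{G_r}(P_r,P_r)\le \frac{\delta_r}{n_r}D_r(P)^2+\lambda_r D_r(P)\le \frac{1}{q}D_r(P)^2+O\!\left(qD_r(P)\right). \]
The error sums to $O(q\sum_r D_r(P))=O(q|P|^2)$. For the main term I would invoke the single-distance bound $\max_{r\neq 0}D_r(P)\ll |P|^2/q+\sqrt q\,|P|$, which follows from the expander mixing lemma applied to the distance graph on points (whose nontrivial eigenvalues are $O(\sqrt q)$, obtainable by the same square-and-Gershgorin argument, the off-diagonal entries of its squared adjacency matrix being the generically-$2$ circle-intersection numbers). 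Then $\frac1q\sum_{r}D_r(P)^2\le \frac1q(\max_r D_r(P))\sum_r D_r(P)\ll |P|^4/q^2+q|P|^2$, giving the claimed bound. I expect the main obstacle to be the eigenvalue computation of the third paragraph — specifically, verifying that the two-reflection decomposition count is exactly constant on rotation-related pairs, so that $C$ is sparse enough for Gershgorin to yield $\lambda_r\ll q$ — together with the careful $q\equiv 1$ versus $q\equiv 3\pmod 4$ bookkeeping for the isotropic directions throughout.
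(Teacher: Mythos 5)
Your core argument is essentially identical to the paper's proof: the reduction to non-isotropic bisectors and reflections, the decomposition of $Q'$ by the common distance $r\neq 0$, the reflection graph on $V_r=\{(\xx,\yy):\|\xx-\yy\|=r\}$, the computation of the entries of the squared adjacency matrix via Corollary \ref{th:rotationsFromReflections2}, the Gershgorin bound giving nontrivial eigenvalues $\ll q$, and the application of Lemma \ref{th:expanderMixing} distance-by-distance (your degree and diagonal bookkeeping differs from the paper's by $O(1)$, which is immaterial). The only genuine divergence is the final summation. The paper bounds $\sum_d |\Pi_d|^2 \ll |P|^4/q + q^2|P|^2$ by quoting the $L^2$ distance-energy estimate of Lemma \ref{th:distanceEnergyBound} (from \cite{BHIPR}), while you use $\sum_r D_r(P)^2 \leq \bigl(\max_r D_r(P)\bigr)\sum_r D_r(P)$ together with the single-distance bound $\max_{r\neq 0} D_r(P) \ll |P|^2/q + \sqrt{q}\,|P|$. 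Your arithmetic from that point is fine: the resulting term $|P|^3/q^{1/2}$ is the geometric mean of $|P|^4/q^2$ and $q|P|^2$, hence dominated by their sum.

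The gap is in your justification of that single-distance bound. You assert that the distance graph on $\F_q^2$ has nontrivial eigenvalues $O(\sqrt{q})$ ``by the same square-and-Gershgorin argument,'' the common-neighbour counts being ``generically-$2$ circle-intersection numbers.'' This is false. For $\|\xx-\yy\|=s\neq 0$ one computes $|C_r(\xx)\cap C_r(\yy)|$ by intersecting $C_r(\xx)$ with the line $B(\xx,\yy)$: parametrizing that line, the count is $2$, $1$, or $0$ according to whether $r/s-1/4$ is a nonzero square, zero, or a nonsquare in $\F_q$. Thus the intersection number is not generically $2$ with sparse exceptions; it equals $2$ on roughly half of each row and $0$ on the other half. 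Whatever rank-one-plus-diagonal approximation you subtract, the error matrix has entries of absolute value $\geq 1$ on a positive proportion of each row, so its row sums are $\Theta(q^2)$ and Gershgorin yields only $\lambda^2\ll q^2$, i.e.\ the trivial bound $\lambda\ll q$ (trivial because the degree is only $q\mp 1$). The square-and-Gershgorin trick works for the reflection graph precisely because there the deviation is supported on the $O(q^2)$ translation-related entries of each row of length $\approx q^3$; that sparseness has no analogue for the distance graph. The bound $\lambda\ll\sqrt{q}$ for the distance graph is genuinely deeper: it amounts to square-root cancellation in Kloosterman/Sali\'e-type character sums over the circle, i.e.\ Weil's bound, which is the content of the Iosevich--Rudnev theorem \cite{IR}. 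Your proof is repaired by simply citing \cite{IR} for $\max_{r\neq 0}D_r(P)\ll |P|^2/q+\sqrt{q}\,|P|$, or by replacing your last step with the paper's appeal to Lemma \ref{th:distanceEnergyBound}; as written, however, the claimed elementary derivation of this key input does not hold.
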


Note that, although Lemma \ref{energybound2} appears very similar to Theorem
\ref{energybound}, they are not identical. In the definition of $Q(P)$
quadruples for which $\|\xx-\zz\|=0$ are excluded, whereas the definition of
$Q'(P)$ excludes quadruples for which $\|\xx-\yy\|=0$. First we deduce Theorem
\ref{energybound} from Lemma \ref{energybound2}.

\begin{proof}[Proof of Theorem \ref{energybound} from Lemma \ref{energybound2}] 
Define the sets 
\begin{align*}
Q'=Q_1&:=\{(\xx,\yy, \zz, \ww) : B(\xx,\zz) = B(\yy,\ww), \| \xx - \yy \| \neq 0
\}, \\Q_2&:=\{(\xx,\yy, \zz, \ww) : B(\xx,\zz) = B(\yy,\ww), \| \xx - \ww \|
\neq 0 \}, \\Q_3&:=\{(\xx,\yy, \zz, \ww) : B(\xx,\zz) = B(\yy,\ww), \| \zz - \yy
\| \neq 0 \}, \\Q_4&:=\{(\xx,\yy, \zz, \ww) : B(\xx,\zz) = B(\yy,\ww), \| \zz -
\ww \| \neq 0 \}.
\end{align*}
Note that $|Q_1|=|Q_2|=|Q_3|=|Q_4|$, since there is a natural bijection between $Q_i$ and $Q_j$ for any $1\leq i,j \leq 4$. Therefore,
$$|Q| \ll |Q'|+ |Q \setminus (Q_1 \cup Q_2 \cup Q_3 \cup Q_4)|.$$
It remains to bound the size of the set $Q \setminus (Q_1 \cup Q_2 \cup Q_3 \cup Q_4):=Q''$.

Let $\xx$ and $\zz$ be arbitrary elements from $P$ such that $\|\xx-\zz\| \neq
0$. We will show that there are at most two pairs $(\yy,\ww)$ such that
$(\xx,\yy,\zz,\ww) \in Q''$. Indeed, if $(\xx,\yy,\zz,\ww) \in Q''$ then we have
\begin{equation} \label{long}
\|\xx-\yy\|=\|\xx-\ww\|=\|\zz-\yy\|=\|\zz-\ww\|=0.
\end{equation}

It follows from \eqref{long} that $\yy$ and $\ww$ each lie on one of the two
isotropic lines through $\xx$. Similarly, $\yy$ and $\ww$ each lie on one of the
two isotropic lines through $\zz$. However, since $\|\xx-\zz\| \neq 0$, these
four isotropic lines are distinct. There are then only two possible choices for
the pair $(\yy,\ww)$ (including reordering the two elements).

Finally, we have $|Q''|\ll |P|^2$, and it then follows from Lemma \ref{energybound} that
$$|Q| \ll |Q'|+|Q''| \ll \frac{|P|^4}{q^2}+q|P|^2+|P|^2 \ll  \frac{|P|^4}{q^2}+q|P|^2 ,$$
as required.
\end{proof}

\subsection*{Fixed Distance Bisector Quadruples}
We are now going to prove the main technical result from which we will deduce
our results. We will split the set of bisector quadruples according to the
distances between the pairs of points. The technical result, which proceeds by way of
spectral graph theory, shows that the bisector quadruples at a given distance
are distributed uniformly among all point pairs with this distance. Then we employ an estimate for distance
quadruples and deduce a bound for bisector quadruples.

For the remainder of this section, let $P \subseteq \mathbb{F}_q^2$ be a fixed
set of points.
Recall that our immediate goal is to place an upper bound on the size of the set
\[Q' = Q'(P) = \{(\xx,\yy,\zz,\ww) \in P^4 : B(\xx,\zz) = B(\yy,\ww), \|\xx-\yy\| \neq 0\}.\]

Rather than bounding $|Q'|$ directly, we will partition $Q'$ into subsets defined by pairs of points at a fixed distance.
For each $d \in \mathbb{F}_q$, define
\begin{align*}
Q'_d 	&=  Q'_d(P) = \{(\xx,\yy,\zz,\ww) \in Q' : \lVert \xx- \yy \rVert = \|\zz-\ww\|= d\},
\text{ and} \\
\Pi_d 	&=  \Pi_d(P) = \{(\xx, \yy) \in P^2 : \Vert \xx - \yy \rVert = d\}.
\end{align*}
From Lemma \ref{th:distanceDecomposition}, we have
\[Q' = \bigcup_{d \neq 0} Q'_d.\]

The following result establishes the uniformity of bisector quadruples at
distance $d$ within all point pairs at distance $d$.

\begin{prop}\label{th:fixedDistanceEnergy}
For $d \neq 0$,
\[|Q'_d| \leq  \frac{|\Pi_d|^2}{q} + 2(q-1)|\Pi_d|.\]
\end{prop}

\begin{proof}
Let $G$ be a graph with vertices
\[V = \{(\xx,\yy) \in \mathbb{F}_q^2 \times \mathbb{F}_q^2 : \lVert \xx-\yy \rVert = d\},\]
and edges 
\[E = \{\{(\xx,\yy),(\zz,\ww)\} \in V^2 : B(\xx,\zz) = B(\yy,\ww)\}.\]

For $x \in V$, define $\Gamma(x)$ to be the neighbourhood of $x$; in other words,
\[\Gamma(x) = \{y \in V : \{x,y\} \in E\}.\]

Let $A$ be the adjacency matrix of $G$. It is straightforward to see that
$A^2_{xy} = |\Gamma(x) \cap \Gamma(y)|$, the number of paths of length $2$ from $x$ to $y$ in $G$.

The plan is to bound the second eigenvalue of $A^2$, and use this along with
Lemma \ref{th:expanderMixing} to complete the proof.
We will bound the eigenvalues of $A^2$ separately in the cases $q = 1 \mod 4$
and $q = 3 \mod 4$.
The method that we use to bound the eigenvalues in each case is reminiscent of
the method used in \cite{solymosi}.

Suppose first that $q = 1 \mod 4$.

By Lemma \ref{th:circles}, $|V| = q^2(q-1)$.
Each vertex has an edge for each of the $q(q-1)$ non-isotropic lines, so $G$ is a $q(q-1)$-regular graph.
From Lemma \ref{th:rotationsFromReflections2}, we have
\[
|\Gamma(x) \cap \Gamma(y)| =
\begin{cases}
	q(q-1), & \text{if } x = y, \\
	q, & \text{if $y$ is a non-isotropic translation of $x$}, \\
	0, & \text{if $y\neq x$ and $y$ is an isotropic translation of $x$}, \\
	q-1, & \text{otherwise}.
\end{cases}
\]

Hence, we can write
\[A^2 = (q-1)J + (q-1)^2 I + E,\]
where $J$ is the all-$1$s matrix, $I$ is the identity matrix, and $E=[E_{xy}]$ is a matrix such that
\[
E_{xy} = 
\begin{cases}
	1, & \text{if $y$ is a non-isotropic translation of $x$}, \\
	1-q, & \text{if $y\neq x$ and $y$ is an isotropic translation of $x$}, \\
	0, & \text{otherwise}.
\end{cases}
\]

Since $E$ is real and symmetric, it has real eigenvalues.
By Lemma \ref{th:circles}, any fixed pair of points has $2(q-1)$ distinct
non-trivial isotropic translations and $(q-1)^2$ distinct non-isotropic
translations.
Hence, the sum of the absolute values of the elements on each row of $E$ is
equal to $3(q-1)^2$; in other words, for any $1\leq i\leq n$, \[\sum_{j=1}^n
|E_{ij}| = 3(q-1)^2.\] Hence, by Lemma \ref{th:Gershgorin}, the absolute value
of each eigenvalue of $E$ is bounded by $3(q-1)^2$.

Since $A$ is a real symmetric matrix, its eigenvectors can be taken to be real
and orthogonal. Moreover, because the row sums of $A$ are all equal, the
all-$1$s vector is an eigenvector of $A$.

Let $\vv$ be an eigenvector of $A$ that is orthogonal to the all-$1$s vector and
that has eigenvalue $\lambda$.
It is clear that $\vv$ is an eigenvector of $I$ with eigenvalue $1$ and an
eigenvector of $J$ with eigenvalue $0$. For any constants $a,b$, the vector
$\vv$ is an eigenvector of the matrix $A^2-aI-bJ$ with eigenvalue $\lambda^2-a$.
In particular, it is an eigenvector of the matrix $E=A^2-(q-1)^2I-(q-1)J$ and
(by above) its eigenvalue is at most $3(q-1)^2$.

Now we have $E \vv = (\lambda^2 - (q-1)^2) \vv$ and hence
\begin{align*}
|\lambda^2-(q-1)^2| &\leq 3(q-1)^2,\,\,\,\,\,\,\,\,\,  \text{so that}\\
\lambda^2 & \leq  4(q-1)^2.
\end{align*}

Hence, the absolute value of each eigenvalue of $A$ that corresponds to an
eigenvector orthogonal to the all-$1$s vector is bounded above by $2(q-1)$.

Now, suppose that $q = 3 \mod 4$.

By Lemma \ref{th:circles}, $|V| = q^2(q+1)$.
Each vertex has an edge for each of the $q(q+1)$ lines, so $G$ is a $q(q+1)$-regular graph.
From Lemma \ref{th:rotationsFromReflections2}, we have
\[
|\Gamma(x) \cap \Gamma(y)| =
\begin{cases}
	q(q+1), & \text{if } x = y, \\
	q, & \text{if $y\neq x$ and $y$ is a translation of $x$}, \\
	q+1, & \text{otherwise}.
\end{cases}
\]

Hence, we can write
\[A^2 = (q+1)J + (q-1)(q+1) I + E,\]
where $J$ is the all-$1$s matrix, $I$ is the identity matrix, and $E=[E_{xy}]$ is a matrix such that
\[
E_{xy} = 
\begin{cases}
	-1, & \text{if $y\neq x$ and $y$ is a translation of $x$}, \\
	0, & \text{otherwise}.
\end{cases}
\]

By Lemma \ref{th:reflections}, any fixed pair of points has $(q-1)(q+1)$
distinct non-trivial translations.
Hence, the sum of the absolute values of the elements on each row of $E$ is
equal to $(q-1)(q+1)$; in other words, for any $1\leq i \leq n$, \[\sum_{j=1}^n
|E_{ij}| = (q-1)(q+1).\] Hence, by Lemma \ref{th:Gershgorin}, the absolute
value of each eigenvalue of $E$ is bounded by $(q-1)(q+1)$.

Let $\vv$ be an eigenvector of $A$, orthogonal to the all-$1$s vector and
associated with eigenvalue $\lambda$.
As in the case where $q= 1 \mod 4$, we have $E\vv=(\lambda^2 -(q-1)(q+1))\vv$
and so
\begin{align*}
|\lambda^2-(q-1)(q+1)| &\leq (q-1)(q+1), \\
\lambda^2 & \leq  2(q-1)(q+1).
\end{align*}

Hence, the absolute value of each eigenvalue of $A$ that corresponds to an
eigenvector orthogonal to the all-$1$s vector is bounded above by
$\sqrt{2(q+1)(q-1)} \leq 2(q-1)$.
Applying Lemma \ref{th:expanderMixing}, we have
\[E(\Pi_d,\Pi_d) \leq \frac{\delta |\Pi_d|^2}{|V|} + \lambda |\Pi_d|,\]
where $\delta$ is the degree of each vertex of $G$.

We complete the proof by observing that $E(\Pi_d,\Pi_d) = |Q'_d|$ is the exactly
the quantity that we want to bound, and then substituting the previously
calculated values for $\delta,|V|$ and $\lambda$ into this inequality.
\end{proof}

We will use the following bound on $\sum_d |\Pi_d|^2$ established in
\cite{BHIPR}\footnote{See the bound on the quantity $\sum_{\mathbb D} \mu^2(\mathbb D)$  in the proof of
Theorem 1.5 therein, in the case when $k=1$ and $d=2$.}. 
\begin{lemma}\label{th:distanceEnergyBound}
Let $P$ be a set of points in $\FF_q$. Then we have the estimate \[ \sum_d
|\Pi_d|^2 \ll |P|^4/q + q^2|P|^2.\]
\end{lemma}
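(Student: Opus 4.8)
The plan is to recognize $\sum_d |\Pi_d|^2$ as a distance energy and to estimate it by a standard Fourier (character-sum) argument, independent of the spectral machinery developed above. Observe first that
\[\sum_d |\Pi_d|^2 = |\{(\xx,\yy,\zz,\ww) \in P^4 : \|\xx-\yy\| = \|\zz-\ww\|\}|,\]
since each equal-distance quadruple is counted exactly once, in the term indexed by its common distance. Fixing a nontrivial additive character $\psi \colon \FF_q \to \C^*$ and using the orthogonality relation $\frac{1}{q}\sum_{t \in \FF_q} \psi(ts) = \mathbf{1}[s=0]$, I would rewrite this count as
\[\sum_d |\Pi_d|^2 = \frac{1}{q} \sum_{t \in \FF_q} \left| \sum_{\xx,\yy \in P} \psi(t\|\xx-\yy\|) \right|^2.\]
The term $t=0$ contributes exactly $|P|^4/q$, which is the main term; it then remains only to show that the terms with $t \neq 0$ contribute $O(q^2|P|^2)$.

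For the off-diagonal terms I would exploit the bilinear structure of the norm. Expanding $\|\xx-\yy\| = \|\xx\| - 2\,\xx\cdot\yy + \|\yy\|$ gives, for each fixed $t$,
\[S_t := \sum_{\xx,\yy \in P} \psi(t\|\xx-\yy\|) = \sum_{\xx,\yy \in P} \psi(t\|\xx\|)\,\psi(t\|\yy\|)\,\psi(-2t\,\xx\cdot\yy).\]
Setting $f(\xx) = \psi(t\|\xx\|)\,\mathbf{1}_P(\xx) \in L^2(\FF_q^2)$, this is a bilinear form $S_t = \sum_{\xx,\yy} f(\xx) f(\yy)\,\psi(-2t\,\xx\cdot\yy)$ whose kernel is the matrix $M_t = [\psi(-2t\,\xx\cdot\yy)]_{\xx,\yy}$. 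The key point is that $M_t$ is obtained from the $\FF_q^2$-Fourier matrix $[\psi(\xx\cdot\yy)]$ by the invertible substitution $\yy \mapsto -2t\yy$ (this is where the hypothesis that the characteristic exceeds $2$ enters, to guarantee $-2t \neq 0$), so a direct computation of $M_t M_t^*$ via character orthogonality yields $M_t M_t^* = q^2 I$ and hence $\|M_t\|_{\mathrm{op}} = q$. Cauchy-Schwarz then gives the uniform bound $|S_t| \leq \|M_t\|_{\mathrm{op}}\,\|f\|^2 = q|P|$ for every $t$.

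Summing over the $q-1$ nonzero values of $t$, the off-diagonal contribution is at most $\frac{1}{q}(q-1)(q|P|)^2 \ll q^2|P|^2$, and combining with the main term gives $\sum_d |\Pi_d|^2 \leq |P|^4/q + O(q^2|P|^2)$, as required. The only genuine content is the uniform estimate $|S_t| \leq q|P|$ on the non-principal exponential sums, which I regard as the main step; the passage to the energy count and the isolation of the main term are routine bookkeeping with the orthogonality relation for $\psi$.
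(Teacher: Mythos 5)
Your proof is correct, and it takes a genuinely different route from the paper: the paper does not prove Lemma~\ref{th:distanceEnergyBound} at all, but imports it from the proof of Theorem~1.5 of \cite{BHIPR} (the bound on $\sum_{\mathbb D}\mu^2(\mathbb D)$ there, specialized to $k=1$, $d=2$), where it arises inside a more general group-action/Fourier framework for $k$-simplices in $\F_q^d$. Your argument, by contrast, is self-contained and elementary, and every step checks out: the identity $\sum_d|\Pi_d|^2=\frac1q\sum_{t\in\FF_q}|S_t|^2$ with $S_t=\sum_{\xx,\yy\in P}\psi(t\|\xx-\yy\|)$ is exact, the $t=0$ term is precisely $|P|^4/q$, and for $t\neq0$ the kernel computation $(M_tM_t^*)_{\xx\xx'}=\sum_{\yy\in\FF_q^2}\psi\bigl(-2t(\xx-\xx')\cdot\yy\bigr)=q^2\,\one[\xx=\xx']$ is valid exactly because $-2t\neq0$ in odd characteristic, giving $\|M_t\|_{\mathrm{op}}=q$, hence $|S_t|\le q\|f\|^2=q|P|$ (Cauchy--Schwarz applies to the bilinear form just as to the sesquilinear one), and an off-diagonal contribution of at most $(q-1)q|P|^2\ll q^2|P|^2$. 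Your uniform bound is sharp: for $P=\FF_q^2$ the sum $S_t$ factors as $q^2$ times a product of two Gauss sums, so $|S_t|=q^3=q|P|$. The key structural point worth noting is that because you only need the bound after summing over all distances $d$, dualizing in $t$ lets you get away with the exact unitarity (up to the scale $q$) of the Fourier-type matrix $M_t$, with no decay estimate for the Fourier transform of an individual sphere (no Kloosterman-sum input), which is what a per-distance analysis in the style of Iosevich--Rudnev or \cite{BHIPR} would require. What the citation buys the paper is generality (arbitrary dimension, $k$-point configurations); what your argument buys is a short, transparent proof of exactly the case needed, with explicit constant $1$: $\sum_d|\Pi_d|^2\le|P|^4/q+q^2|P|^2$, and it generalizes immediately to $\F_q^d$ with error term $q^d|P|^2$.
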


\begin{proof}[Proof of Lemma \ref{energybound2}]
We use Lemma \ref{th:distanceEnergyBound} and Proposition \ref{th:fixedDistanceEnergy} to complete the proof.
\begin{align*}
|Q'| &=  	\sum_{d\neq 0} |Q'_d|, \\
	&\leq   \sum_d \left(\frac{|\Pi_d|^2}{q} + 2(q-1)|\Pi_d| \right ), \\
	&=  	\left(\sum_d  \frac{|\Pi_d|^2}{q}\right) + 2(q-1)|P|^2 , \\
	&\ll  	\frac{|P|^4}{q^2} + q|P|^2. 
\end{align*}
\end{proof}

\begin{proof}[Proof of Theorem \ref{main1}] For a line $l \in B(P)$, let
$w(l)$ be the number of point pairs $(\xx,\yy) \in P^2$ such that $B(\xx,\yy) =
l$.
Note that $\sum_{l} w(l) = |P|^2-|\Pi_0|$, and since for any $\xx \in P$ there
exist at most $2q-1$ points $\yy \in \F_q^2$ such that $\|\xx-\yy\|=0$, we have
the bound $|\Pi_0|<2|P|q$. It can be assumed that $|P| \geq 4q$; this follows
from $|P|>q^{3/2}$ provided that $q \geq 16$, and for smaller values of $q$ the
theorem follows trivially by choosing suitably large constants hidden in the
$\gg$ notation. Since $|P| \geq 4q$, we have $$\sum_{l\in B(P)} w(l) =
|P|^2-|\Pi_0|>|P|^2/2.$$ Also, $\sum_{l} w(l)^2 = |Q|$.
By Cauchy-Schwarz, \[|P|^4 \ll \left ( \sum_{l \in B(P)} w(l) \right)^2 \leq
|B(P)| \sum w(l)^2  = |B(P)| |Q|.\] Hence, by Theorem \ref{energybound},
\[|B(P)| \gg \frac{|P|^4}{|P|^4/q^2 + q|P|^2}.\] Hence, if $|P| > q^{3/2}$, then
$|B(P)| \gg q^2$.
\end{proof}

Note that, as an alternative to Lemma \ref{th:distanceEnergyBound}, the bound
\begin{equation}
\sum_d|\Pi_d|^2 \ll |P|^{7/2}
\label{CSeasy}
\end{equation}
comes from a straightforward application of the Cauchy-Schwarz inequality. This is because 
\[\sum_d|\Pi_d|^2 \leq |P|^2\max_d|\Pi_d|,\]
and then the bound $\max_d|\Pi_d|=O(|P|^{3/2})$ can be obtained by constructing a set of circles of radius $d$ centred at points of $P$ and applying a Cauchy-Schwarz incidence bound to show that there are $O(|P|^{3/2})$ incidences between these circles and $P$.

If we plug this weaker bound into the proof of Lemma \ref{energybound2}, we obtain the bound 
\begin{equation}
|Q'| \ll \frac{|P|^{7/2}}{q}+q|P|^2.
\label{Qweak}
\end{equation}
Although this bound is not strong enough to prove Theorem  \ref{main1}, a careful look at the forthcoming analysis in section \ref{sec:pinnedDistances} will show that we can use \eqref{Qweak} instead of Lemma \ref{energybound2}. In particular, one can obtain the proof of Theorem \ref{main2} without needing to go through the extra work involved in proving Lemma \ref{th:distanceEnergyBound}.

\section{Application to pinned distances}\label{sec:pinnedDistances}

In this section, we use the bound on the bisector energy $Q$ to deduce an upper
bound on the number of isosceles triangles determined by a set of points in the
plane. A simple application of the Cauchy-Schwarz inequality translates this
into a lower bound on the number of pinned distances, proving Theorem
\ref{main2}.

One of the tools which will be needed is a weighted version of the Szemer\'{e}di-Trotter Theorem, which generalises ~\cite[Theorem~3]{Vinh} to the
case when the points and lines have multiplicity. 

\subsection*{A weighted version of the Szemer\'{e}di-Trotter Theorem}

Before stating and proving the incidence bound, let us first set up some
notation. Let $\mathcal{L}$ be a multiset of lines and let $\mathcal{P}$ be a
multiset of points in the plane $\mathbb{F}_q^2$. When considering the set of
lines in $\mathcal{L}$ or set of points in $\mathcal{P}$ without multiplicity,
we will refer to the set as $L$ or $P$, respectively. For a line $l\in{L}$, the
weight of $l$ is denoted $w(l)$, that is, $w(l)$ is the number of occurrences of
$l$ in the multiset $\mathcal{L}$.
Similarly, denote the weight of $p \in P$ by $w'(p)$.
Note that $$|\mathcal{L}|=\sum_{l\in{L}}w(l) \text{, and } |\mathcal{P}| =
\sum_{p\in{P}}w'(p) .$$

We define the number of incidences between $\mathcal{P}$ and $\mathcal{L}$ to be
$$I(\mathcal P,\mathcal{L})=\sum_{\xx\in{P}}\sum_{l\in{L}}w'(\xx)w(l)l(\xx).$$

\begin{lemma} \label{th:ST} Let $\mathcal{P}$ be a multiset of points in
$\FF_q^2$, and let $\mathcal{L}$ be a multiset of lines. Then $$I(\mathcal
P,\mathcal{L})\leq{\frac{|\mathcal
P||\mathcal{L}|}{q}+\left(\sum_{p\in{P}}w'(p)^2\right)^{1/2}\left(\sum_{l\in{L}}w(l)^2\right)^{1/2}q^{1/2}}.$$
\end{lemma}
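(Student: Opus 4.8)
The plan is to prove this weighted Szemerédi–Trotter bound by reducing to the unweighted spectral incidence bound of Vinh (the cited \cite[Theorem 3]{Vinh}), which states that for a genuine set $P$ of points and genuine set $L$ of lines in $\FF_q^2$, the number of incidences $I(P,L)$ satisfies
$$\left| I(P,L) - \frac{|P||L|}{q} \right| \leq \sqrt{q}\sqrt{|P||L|}.$$
The key obstacle is handling the multiplicities $w'(p)$ and $w(l)$ cleanly; the natural strategy is to express the weighted incidence count as a bilinear form and apply Cauchy–Schwarz to separate the point-weights from the line-weights.

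Let me think about how Vinh's theorem is itself proved, since that is the engine. The point-line incidence graph in $\FF_q^2$ is (essentially) a $(q+1)$-regular bipartite graph on the $q^2$ points and $q^2+q$ lines, and its nontrivial eigenvalues are bounded by $\sqrt{q}$. Phrased in the language of Lemma~\ref{th:expanderMixing} that we already have available, the incidence structure gives us, for the relevant bipartite adjacency operator $N$ (where $N_{\xx,l} = l(\xx)$ is the indicator that $\xx$ lies on $l$),
$$\left| \langle f, N g \rangle - \frac{|P||L|}{q}\,\EE(f)\EE(g)\cdot(\text{normalization}) \right| \leq \lambda \|f\|\,\|g\|$$
with $\lambda \ll \sqrt{q}$. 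The cleanest route, therefore, is not to cite Vinh's unweighted statement as a black box but to run the expander-mixing argument directly with weighted functions.

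Concretely, here is how I would carry it out. First I would set $f$ to be the function on the point set equal to the weight $w'(\xx)$ (and $0$ off $P$), and $g$ the function on the line set equal to $w(l)$ (and $0$ off $L$). Then the weighted incidence count is exactly the bilinear form
$$I(\mathcal{P},\mathcal{L}) = \sum_{\xx}\sum_{l} w'(\xx)\,w(l)\,l(\xx) = \langle f, N g\rangle.$$
Applying the spectral estimate, the main term is $\frac{1}{q}\left(\sum_{\xx} w'(\xx)\right)\left(\sum_{l} w(l)\right) = \frac{|\mathcal{P}||\mathcal{L}|}{q}$, and the error term is controlled by
$$\lambda \|f\|\,\|g\| \ll \sqrt{q}\left(\sum_{p\in P} w'(p)^2\right)^{1/2}\left(\sum_{l\in L} w(l)^2\right)^{1/2},$$
since $\|f\|^2 = \sum_{p} w'(p)^2$ and $\|g\|^2 = \sum_{l} w(l)^2$. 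This is precisely the claimed bound. The one bookkeeping subtlety is that the point-line incidence operator in $\FF_q^2$ is bipartite and not quite regular on both sides (there are $q^2$ points but $q^2+q$ lines), so I would verify that the normalization in the main term comes out to $|\mathcal{P}||\mathcal{L}|/q$ exactly and that the second eigenvalue bound $\lambda \le \sqrt{q}$ (or a suitable constant multiple) genuinely holds for this graph.

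The main obstacle, and the part deserving the most care, is the spectral input itself: establishing that the nontrivial eigenvalues of the weighted point-line incidence operator are bounded by $O(\sqrt{q})$, together with correctly identifying the top eigenspace so that its contribution produces exactly the main term $|\mathcal{P}||\mathcal{L}|/q$. Since this is precisely Vinh's result \cite{Vinh}, the honest and economical approach is to invoke his eigenvalue bound and feed weighted indicator functions through Lemma~\ref{th:expanderMixing}, rather than reproving the eigenvalue computation for the incidence graph of the affine plane. If one prefers to avoid any discrepancy arising from the bipartite/non-regular structure, an alternative is to pass to the unweighted statement applied to the ``blown-up'' sets where each point and line is duplicated according to its weight; but this is wasteful and the direct bilinear-form computation above is cleaner, so I expect the final argument to be quite short once the eigenvalue bound is quoted.
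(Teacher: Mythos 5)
Your proposal is correct and takes essentially the same route as the paper: the paper likewise does not invoke Vinh's unweighted theorem as a black box, but feeds the weight functions $w'$ and $w$ directly through the $L^2$ expander mixing lemma (Lemma~\ref{th:expanderMixing}), writing $I(\mathcal{P},\mathcal{L})=\langle w, Aw'\rangle$ and quoting Vinh's bound of $\sqrt{q}$ on the nontrivial eigenvalues. The one subtlety you flagged but deferred --- the bipartite, non-regular structure of the affine point-line incidence graph --- is resolved in the paper by identifying both points and lines with points of the projective plane $\FF_q\PP^2$, so that the incidence structure becomes a single $(q+1)$-regular graph on $q^2+q+1$ vertices, and the main term $\delta n\EE(f)\EE(g)=\frac{q+1}{q^2+q+1}|\mathcal{P}||\mathcal{L}|\leq |\mathcal{P}||\mathcal{L}|/q$ comes out as claimed.
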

\begin{proof}
The proof given here is identical to that in Vinh's article but with the $L^2$
expander mixing lemma instead of the traditional one. Each line in $\FF_q^2$ is
described by a point in the projective plane $\FF_q\PP^2$. Indeed any line $l$
is given by $l=\{(x,y)\in\FF_q^2:ax+by+c=0\}$ for some $(a,b,c)$
defined up to non-zero scalar multiples. We also have the usual embedding of $\FF_q^2$ into
the projective plane $(x,y)\mapsto [x:y:1]$. Consider the graph on
$q^2+q+1$ vertices given by points in $\FF_q\PP^2$, and which has as edges
\[E=\left\{\{[a:b:c],[x:y:z]\}:ax+by+cz=0\right\}.\] A straightforward
calculation shows that this graph is $(q+1)$-regular. After identifying
$l\in L$ and $p \in P$ with their corresponding points in $\FF_q\PP^2$, the number of weighted number of incidences is \[\sum_{l=[a:b:c]\in L}w(l)\sum_{\substack{p=[x:y:1]\in P\\ ax+by+c=0}}w'(p)=\langle w,Aw'\rangle\]
 Vinh showed that the
non-trivial eigenvalues of $A$ all have size at most $\sqrt q$ and thus the
lemma follows from Lemma \ref{th:expanderMixing}.
\end{proof}

\subsection*{Bounding the number of distinct isosceles triangles}The next task
is to use the weighted incidence bound to obtain an upper bound on the number of
isosceles triangles determined by $P$. The set of isoceles triangles determined
by $P$ is defined to be the set of ordered triples
$$\Delta(P):=\{(\xx,\yy,\zz)\in{P^3}:\|\xx-\zz\|=\|\yy-\zz\|, \|\xx-\yy\| \neq 0\}.$$
\begin{lemma} \label{triangles} For any set $P\subset{\FF_q^2}$,
$$|\Delta(P)|\ll{\frac{|P|^3}{q}+\frac{|P|^{5/2}}{q^{1/2}}+q|P|^{3/2}}.$$
\end{lemma}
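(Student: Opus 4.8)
The plan is to count isosceles triangles $(\xx,\yy,\zz) \in \triangle(P)$ by reducing to an incidence count between points and perpendicular bisectors, so that the weighted Szemer\'edi--Trotter bound of Lemma~\ref{th:ST} can be applied. The key geometric observation is that a triple $(\xx,\yy,\zz)$ is isosceles with apex $\zz$ precisely when $\|\zz - \xx\| = \|\zz - \yy\|$, which says exactly that $\zz$ lies on the perpendicular bisector $B(\xx,\yy)$. Thus each isosceles triangle corresponds to an incidence between the apex point $\zz \in P$ and a bisector line $B(\xx,\yy)$ arising from a pair $(\xx,\yy)$ of points of $P$ at nonzero distance.

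First I would set up the incidence system. Take $\cP$ to be the multiset consisting of the points of $P$, each with weight $1$ (so $|\cP| = |P|$ and $\sum_p w'(p)^2 = |P|$). Take $\cL$ to be the multiset of bisectors $B(\xx,\yy)$ as $(\xx,\yy)$ ranges over pairs in $P$ with $\|\xx-\yy\| \neq 0$; the underlying set $L$ is $B(P)$, and for a line $l \in B(P)$ its weight $w(l)$ is the number of such pairs with $B(\xx,\yy) = l$, exactly the quantity appearing in the proof of Theorem~\ref{main1}. With these choices $|\triangle(P)| = I(\cP,\cL)$, since summing $w(l)$ over all lines through a given apex $\zz$ counts precisely the pairs $(\xx,\yy)$ forming an isosceles triangle with apex $\zz$. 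I then record the two key totals: $|\cL| = \sum_l w(l) = |P|^2 - |\Pi_0| \leq |P|^2$, and, crucially, $\sum_{l \in L} w(l)^2 = |Q(P)|$, the bisector energy.

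Now I would feed these into Lemma~\ref{th:ST}. The bound reads
\[
I(\cP,\cL) \leq \frac{|\cP||\cL|}{q} + \left(\sum_{p} w'(p)^2\right)^{1/2}\left(\sum_l w(l)^2\right)^{1/2} q^{1/2}.
\]
Substituting $|\cP| = |P|$, $|\cL| \leq |P|^2$, $\sum_p w'(p)^2 = |P|$, and $\sum_l w(l)^2 = |Q(P)|$ gives
\[
|\triangle(P)| \ll \frac{|P|^3}{q} + |P|^{1/2} |Q(P)|^{1/2} q^{1/2}.
\]
The main work is then to control the second term using the energy bound. By Theorem~\ref{energybound}, $|Q(P)| \ll |P|^4/q^2 + q|P|^2$, so $|Q(P)|^{1/2} \ll |P|^2/q + q^{1/2}|P|$, using subadditivity of the square root. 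Multiplying by $|P|^{1/2}q^{1/2}$ yields the two remaining terms $|P|^{5/2}/q^{1/2}$ and $q|P|^{3/2}$, matching the claimed bound exactly.

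The step I expect to require the most care is verifying that the identity $|\triangle(P)| = I(\cP,\cL)$ holds on the nose, including the treatment of the $\|\xx-\yy\| \neq 0$ constraint that defines $\triangle(P)$ and that restricts which pairs contribute to $\cL$. One must check that every isosceles triple with nonzero base distance contributes exactly one incidence (the apex $\zz$ on the bisector $B(\xx,\yy)$), and that no spurious incidences arise — in particular that when several pairs share a common bisector the weight $w(l)$ correctly aggregates them, which is why the energy $|Q(P)| = \sum_l w(l)^2$ enters rather than a crude bound on individual weights. The constraint $\|\xx - \yy\|\neq 0$ is precisely what guarantees $B(\xx,\yy)$ is a genuine (non-isotropic) line and hence a legitimate element of the line multiset, so I would invoke Lemma~\ref{th:LinesAndReflections} to justify that the bisector is well defined there. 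Everything else is routine substitution and the elementary inequality $\sqrt{a+b} \leq \sqrt a + \sqrt b$.
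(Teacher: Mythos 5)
Your proposal is correct and follows essentially the same route as the paper: identify $|\triangle(P)|$ with the weighted incidence count between $P$ (unit weights) and the multiset of bisectors of nonzero-distance pairs, apply Lemma~\ref{th:ST}, bound $|\mathcal{L}| \leq |P|^2$ and $\sum_{l} w(l)^2$ by the bisector energy via Theorem~\ref{energybound}, and finish with $\sqrt{a+b} \leq \sqrt{a} + \sqrt{b}$. The paper's proof is the same argument in the same order, so there is nothing to add.
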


\begin{proof} Define a multiset of lines $\mathcal{L}$ to be the set of
perpendicular bisectors determined by pairs of elements from $\xx,\yy \in P$
such that $\|\xx-\yy\| \neq 0$. The weight of a line $l\in{L}$ is the number of
pairs in $P\times{P}$ which determine $l$. That is,
$$w(l)=\{(\xx,\yy)\in{P\times{P}}:l=B(\xx,\yy)\}.$$

Now\footnote{In this particular incidence problem, we have a multiset of lines $\mathcal L$, but our set of points $P$ is not a multiset in the true sense. That is, all of the elements $p \in P$ have weight $w'(p)=1$.}, note that the number of weighted incidences $I(P,\mathcal{L})$ is precisely
the quantity $\Delta(P)$. Indeed, by the definition
of the perpendicular bisector $B(\xx,\yy)$, a point $\zz$ belongs to the line
$B(\xx,\yy)$ if and only if $(\xx,\yy,\zz)\in\Delta(P)$. Applying Lemma
\ref{th:ST} yields,
\begin{equation} \label{firstbound}
|\Delta(P)|\leq{\frac{|P||\mathcal{L}|}{q}+|P|^{1/2}\left(\sum_{l\in{L}}w^2(l)\right)^{1/2}q^{1/2}}.
\end{equation}

The quantity $|\mathcal{L}|$ is the total weight of the lines, which is the
number of pairs of elements of $P$ whose distance is non-zero. That is,
\begin{equation} \label{secondbound}
|\mathcal{L}|=|P|^2-|\Pi_0|<|P|^2.
\end{equation}

As in the proof of Theorem \ref{main1} the quantity $\sum_{l\in{L}}w^2(l)$ is
equal to $Q$, which was bounded in Theorem \ref{energybound}. Therefore, we have
\begin{equation} \label{thirdbound}
\sum_{l\in{L}}w^2(l)\ll{\frac{|P|^4}{q^2}+q|P|^2}.
\end{equation}
Combining \eqref{firstbound}, \eqref{secondbound} and \eqref{thirdbound}, it follows that
$$|\Delta(P)|\ll{\frac{|P|^3}{q}+\frac{|P|^{5/2}}{q^{1/2}}+q|P|^{3/2}},$$
as required.
\end{proof}
Note, in particular, that
\begin{equation}
|P| \geq q^{4/3} \Rightarrow |\Delta(P)|\ll \frac{|P|^3}{q}.
\label{specialcase}
\end{equation}

\begin{proof}[Proof of Theorem \ref{main2}]

Recall that Theorem \ref{main2} states that, if $|P| \geq q^{4/3}$ then there
exists a subset $P'$ such that $|P'|\gg |P|$ and, for all $\aa \in P'$,
$$|\{\|\aa-\bb\|:\bb\in P\}|\gg q.$$ Following a familiar argument from the
Euclidean pinned distances problem (see, for example \cite{PT}), it will be
shown that an upper bound on the number of isosceles triangles implies a lower
bound for the number of pinned distances.

For a point $\aa \in P$, construct a family of circles $\mathcal C_{\aa}$ which
consists of all circles centred at $\aa$ with non-zero radius which contain at
least one point from $P$. In particular, note that $|\mathcal
C_{\aa}|<|\{\|\aa-\bb\|:\bb \in P\}|$. Observe that
\begin{equation} \label{triangle}
|\Delta(P)|=\sum_{\aa \in P}\sum_{C\in \mathcal C_{\aa}}\left(|C\cap P|^2 - |\{(\bb,\cc) \in (C \cap P)^2 : \|\bb-\cc\|=0\}|\right).
\end{equation}

The next observation is that for a circle $C$ with non-zero radius centred at
$\aa$ and a fixed point $\bb \in C \cap P$, there is only one point which is a
distance zero from $\bb$ and lies on the circle $C$, and this point is $\bb$. Indeed, it can be verified directly that the only choice of $\cc$ satisfying the system of equations
\[ \|\cc-\bb\|=0, \,\,\,\,\, \|\cc-\aa\|=\|\bb-\aa\|\neq 0\]
is $\cc=\bb$.

Applying this information to \eqref{triangle} yields
\begin{align}
|\Delta(P)|&=\sum_{\aa \in P}\sum_{C\in \mathcal C_{\aa}}\left(|C\cap P|^2 - |C\cap P|\right)
\\&\geq \sum_{\aa \in P}\sum_{C\in \mathcal C_{\aa}}|C \cap P|^2-|P|^2. \label{eq2}
\end{align}
Note also that, since there are at most $2q-1$ points in $P$ which are a distance zero from a fixed point $\aa$,
\begin{align*}
\sum_{\aa \in P} \sum_{C \in \mathcal C_{\aa}}|C\cap P| & \geq \sum_{\aa \in P} (|P|-2q)
\\& \geq \sum_{\aa\in P}|P|/2
\\& \gg |P|^2
\end{align*}
In the second inequality it is assumed that $|P| \geq 4q$. This follows from the condition that $|P| \geq q^{4/3}$ provided that $q$ is sufficiently large, and for small $q$ the theorem is trivially true. Then, by Cauchy-Schwarz and \eqref{specialcase}
\begin{align*}
|P|^4&\ll \left(\sum_{\aa\in P}\sum_{C\in \mathcal C_{\aa}}|C \cap P| \right)^2
\\&\leq \left(\sum_{\aa}\sum_{C\in{\mathcal C_{\aa}}}|C \cap P|^2\right) \left(\sum_{\aa \in P}|\{\|\bb-\aa\|:\bb\in P\}|\right)
\\&\ll  \left(|\Delta(P)|+|P|^2\right) \left(\sum_{\aa \in P}|\{\|\bb-\aa\|:\bb\in P\}|\right)
\\&\ll  \left(\frac{|P|^3}{q}\right) \left(\sum_{\aa \in P}|\{\|\bb-\aa\|:\bb\in P\}|\right).
\end{align*}
Therefore,
\begin{equation}
\sum_{\aa \in P}|\{\|\bb-\aa\|:\bb\in P\}| \geq 2c|P|q.
\label{case11}
\end{equation}
for some constant $c>0$.

Now, define $P':=\{\aa \in P:|\{\|\bb-\aa\|:\bb\in P\}| \geq cq\}$. It remains
to show that $|P'| \gg |P|$. This follows from \eqref{case11}, since
\begin{align*}
2c|P|q &\leq \sum_{\aa \in P}|\{\|\bb-\aa\|:\bb\in P\}| 
\\&= \sum_{\aa \in P'}|\{\|\bb-\aa\|:\bb\in P\}| +  \sum_{\aa \in P \setminus P'}|\{\|\bb-\aa\|:\bb\in P\}|
\\&\leq \sum_{\aa \in P'}|\{\|\bb-\aa\|:\bb\in P\}| +  c|P|q,
\end{align*}
which implies that
$$c|P|q \leq  \sum_{\aa \in P'}|\{\|\bb-\aa\|:\bb\in P\}| \leq |P'|q,$$
and thus $|P'|\gg |P|$.
\end{proof}

\section*{Acknowledgements} Brandon Hanson was supported by NSERC of Canada. Ben
Lund was supported by NSF grant CCF-1350572.
Oliver Roche-Newton was supported by the Austrian Science Fund (FWF): Project
F5511-N26, which is part of the Special Research Program ``Quasi-Monte Carlo
Methods: Theory and Applications".
Part of this research was undertaken when the authors were visiting the
Institute for Pure and Applied Mathematics, UCLA, which is funded by the NSF. We
are grateful to Swastik Kopparty, Doowon Koh, Tom Robbins, Adam Sheffer and
Frank de Zeeuw for several helpful conversations related to the content of this
paper. Finally, we are grateful to an anonymous referee for several comments which have helped to improve the exposition of the paper.

\appendix
\section{Facts from finite plane geometry}\label{sec:GeometryAppendix}

All of the results in this appendix are quite elementary and rely only on linear
algebra over finite fields. However, in the interest of self-containment we
have included them.

A rigid motion of $\FF_q^2$ is an affine map $\cA(\xx)=A\xx+\bb$ such that
\[\|\xx-\yy\|=\|\cA(\xx)-\cA(\yy)\|.\] Thus rigid motions map a circle of a
given radius to another circle of the same radius. We will be interested in
rotations, reflections and translations, which were defined in section
\ref{sec:finitePlaneGeometry}. 

A rotation is said to be trivial if its corresponding rotation matrix is the
identity, whilst a translation by $\uu$ is said to be trivial if $\uu=0$. Note that there are no trivial
reflections. Also note that the product of two rotation matrices or two
reflection matrices is a rotation matrix. An obvious first remark is that
rotation and reflection matrices are unitary so that rotations, reflections and
translations are in fact rigid.

The rigid motions we are working with are essentially described by their fixed
points:

\begin{applemma}
Any non-trivial translation has no fixed points. Any non-trivial rotation has a
unique fixed point. Any reflection has a unique fixed affine line.
\end{applemma}
\begin{proof}
That a non-trivial translation fixes no points is clear. 

Suppose we have a rotation $\cR_{R,\uu}$ with $R\neq I$. Then $\uu$ is clearly
fixed. If $\vv$ was also fixed we would have $(R-I)\vv=(R-I)\uu$ (where $I$ is
the identity). However, $\det(R-I)$ is non-zero\footnote{Note that we use
the assumption that the characteristic of $\F_q$ is not equal to 2 in this
calculation.} since $R\neq I$. Thus $R-I$ is invertible and $\uu=\vv$.

A reflection matrix $S\neq I$ has eigenvalues $\pm 1$. If $\cS_{S,\uu}$ fixes
$\vv$ then $\uu-\vv$ lies in the eigenspace of $1$, which is a line $l$. Hence, $\vv\in l+\uu$.
\end{proof}

Two rotations $\cR_{R_1,\uu_1}$ and $\cR_{R_2,\uu_2}$ are called
\textit{complimentary} if $R_1^{-1}=R_2$. Two reflections $\cS_{S_1,\uu_1}$ and
$\cS_{S_2,\uu_2}$ are called \textit{parallel} if $S_1=S_2$. It is a
straightforward computation that the fixed lines of two parallel reflections are parallel.

\begin{applemma}\label{th:composition} The composition of any two
non-complimentary rotations is a rotation, while the composition of two
complimentary rotations is a translation. The composition of any two
non-parallel reflections is a rotation while the composition of any two parallel
reflections is a translation. The composition of a non-trivial rotation and a
translation is a rotation.
\end{applemma}
\begin{proof}
Suppose we have rotations by $R_1$ and $R_2$ about $\uu_1$ and $\uu_2$ respectively. The composition 
is the map \[\vv\mapsto R_2R_1\vv-R_2R_1\uu_1+R_2\uu_1-R_2\uu_2+\uu_2.\] This is a rotation provided 
there is a $\uu$ such that \[(R_2R_1-I)\uu=R_2R_1\uu_1-R_2\uu_1+R_2\uu_2-\uu_2\] which exists provided 
$R_2R_1$ is not the identity. If it is the identity then the
rotations are complimentary and the composition is a translation. 
The same proof works when $R_1$ and $R_2$ are replaced by reflection matrices as
the product of two reflection matrices is a rotation matrix.

If we translate by $\uu_1$ and then rotate by $R$ about $\uu_2$ then the
composition is  \[\vv\mapsto R(\vv+\uu_1)-R\uu_2+\uu_2.\] To show this is a
rotation it suffices to find $\uu$ such that \[(R-I)\uu=R\uu_2-R\uu_1-\uu_2\]
which can be done since $R-I$ is invertible.
\end{proof}

Our primary reason for being interested in rigid motions is the relationship
between reflections and their fixed lines. Observe that when $\uu'$ lies on the
fixed line of a reflection $\cS_{S,\uu}$ then $\cS_{S,\uu}=\cS_{S,\uu'}$. 

\begin{applemma}\label{th:LinesAndReflections}
Suppose $\xx\in \FF_q^2$ and $\cS$ is a reflection which does not fix $\xx$. Then the fixed line of
$\cS$ is $B(\xx,\cS(\xx))$. Moreover, a line $l$ is the fixed line of a
unique reflection if and only if it is non-isotropic. If $\yy\in \FF_q^2$ is any
point such that $\|\xx-\yy\|\neq 0$, then the line $B(\xx,\yy)$ is non-isotropic
and there is a unique reflection $\cS$ such that $\cS(\xx)=\yy$ which fixes it. 
\end{applemma}
\begin{proof}
Observe that if $\uu$ is fixed by $\cS$ then
\[\|\xx-\uu\|=\|\cS(\xx)-\cS(\uu)\|=\|\cS(\xx)-\uu\|\] so that $\uu$ lies on
$B(\xx,\cS(\xx))$. But the fixed points of $\cS$ form a line, so it must
coincide with $B(\xx,\cS(\xx))$. 

Let $\uu_1$ and $\uu_2$ be any distinct points on the line $l$, which is assumed to
be non-isotropic. Set $\dd=(d_1,d_2)=\uu_1-\uu_2$ so that $\|\dd\|\neq 0$. The
reflection $\cS$ by
\[\frac{1}{d_1^2+d_2^2}\mat{d_1^2-d_2^2}{2d_1d_2}{2d_1d_2}{d_2^2-d_1^2}\] about
$\uu_1$ fixes $l$. This reflection is in fact unique. If $\cS'$ were another
reflection fixing $l$ then their composition would be either a rotation or a
translation fixing a line. It follows that $\cS'=\cS$.

Finally, suppose $\yy\in\FF_q^2$ is distinct from $\xx$. If $B(\xx,\yy)$ is
isotropic then for distinct points $\uu_1,\uu_2\in B(\xx,\yy)$ we have
$\dd=\uu_1-\uu_2=t\cdot (1,\pm i)$. Since $\xx-\yy$ is orthogonal to $\dd$ we
must have that $t(\xx\pm i\yy)=0$ and $\|\xx-\yy\|=0$.
\end{proof}

\begin{proof}[Proof of Lemma \ref{th:distanceDecomposition}]
Since $B$ is non-isotropic, there is a unique reflection $\cS$ which fixes it.
Then $\zz=\cS(\xx)$ and $\ww=\cS(\yy)$ and the result follows by rigidity.
\end{proof}

We have already mentioned that rigid motions send circles to circles. In fact
given two points on a circle, we now discuss when a rigid motion sends one to
the other.

\begin{applemma}
Let $\xx,\yy\in C_r(\uu)$ for elements $\xx,\yy,\uu\in\FF_q^2$ and $r\neq 0$.
There is a unique rotation $\cR$ fixing $\uu$ and sending $\xx$ to
$\yy$.
\end{applemma}
\begin{proof}
After applying a translation if necessary we can assume $\uu=0$. Then we have
points $\xx$ and $\yy$ with $\|\xx\|=\|\yy\|$. We need a rotation matrix $R$
such that $R\xx=\yy$. That is, we are to solve
\[\mat{a}{-b}{b}{a}\vec{x_1}{x_2}=\vec{y_1}{y_2}\] where $\xx=(x_1,x_2)$ and $\yy=(y_1,y_2)$.
This is the same as solving
\[\mat{x_1}{-x_2}{x_2}{x_1}\vec{a}{b}=\vec{y_1}{y_2}.\] Since $x_1^2+x_2^2\neq0$ this 
linear equation has a unique solution. We need to check that this unique solution satisfies $a^2+b^2=1$. 
Indeed, we can write
\begin{equation}
\vec{a}{b}=\|\xx\|^{-1}\mat{x_1}{x_2}{-x_2}{x_1}\vec{y_1}{y_2}=\|\xx\|^{-1}\vec{x_1y_1+x_2y_2}{x_1y_2-x_2y_1}
\label{reflectioncalculation}
\end{equation}
It follows that
\[\|(a,b)\|=\|\xx\|^{-2}\|(x_1y_1+x_2y_2,x_1y_2-x_2y_1)\|=\|\xx\|^{-2}\|\xx\|\|\yy\|=1.\]
Suppose there were another rotation matrix $R'$ with the same property. Then
$R'R^{-1}$ would be a rotation matrix fixing two points, $0$ and $\yy$, and so
must be the identity. 
\end{proof}

\begin{applemma} \label{th:QuadruplesAndRotations}
Suppose $\xx,\yy,\zz,\ww\in\FF_q^2$ such that $(\xx,\yy) \neq (\zz,\ww)$ and \[\|\xx-\yy\|=\|\zz-\ww\|\neq 0.\] If
$\xx-\yy\neq \zz-\ww$, then there is a unique rotation $\cR$ with
$\cR(\xx)=\zz$ and $\cR(\yy)=\ww$, and there are no translations with this
property. If $\xx-\yy=\zz-\ww$ then there is a unique translation $\cT$ with
$\cT(\xx)=\zz$ and $\cT(\yy)=\ww$, and there are no rotations with this property.
\end{applemma}
\begin{proof}
Let $\cT$ be translation by $\zz-\xx$. Then $\cT(\xx)=\zz$. If
$\xx-\yy=\zz-\ww$ then $\cT(\yy)=\ww$ and $\cT$ is the desired translation and
it is plainly unique. Moreover, if $\cR$ is a rotation with $\cR(\xx)=\zz$ and
$\cR(\yy)=\ww$ then by Lemma \ref{th:composition}, $\cR^{-1}\circ \cT$ is a
non-trivial rotation fixing both $\xx$ and $\yy$ which is impossible.

Otherwise $\xx-\yy\neq\zz-\ww$ and
\[\|\zz-\ww\|=\|\xx-\yy\|=\|\cT(\xx)-\cT(\yy)\|=\|\zz-\cT(\yy)\|.\] Thus
$\cT(\yy)$ and $\ww$ lie on a common circle centered at $\zz$ and there is a
non-trivial rotation $\cR$ about $\zz$ with $\cR(\cT(\yy))=\ww$. Then by Lemma
\ref{th:composition}, $\cR'=\cR\circ \cT$ is the desired rotation. As for
uniqueness, if we had another non-trivial rotation $\cR''$ then the non-trivial
rotation $\cR'^{-1}\circ\cR''$ would fix both $\xx$ and $\yy$ which is
impossible. Similarly, if $\cT$ is a translation with $\cT(\xx)=\zz$ and
$\cT(\yy)=\ww$ then $\cR'^{-1}\circ \cT$ is a non-trivial rotation fixing both
$\xx$ and $\yy$ which is impossible.
\end{proof}

We are now in a position to prove the necessary lemmas from section
\ref{sec:finitePlaneGeometry}.

\begin{proof}[Proof of Lemma \ref{th:reflections}]
After a translation we can assume $\uu=0$. Then any point $\vv\in \FF_q^2$ lies
on a line passing through $0$. Each of these lines contains exactly
$q-1$ non-zero points and hence there are $\frac{q^2-1}{q-1}=q+1$ lines passing
through $0$. 

If $q=1\mod 4$ then the lines through $0$ which are isotropic are spanned by
$(1,i)$ or $(1,-i)$ and the rest are non-isotropic. The non-isotropic lines are
fixed by a unique reflection about $0$. Since rotations and reflections are in bijection, there are $q-1$ of each about $0$.

When $q=3\mod 4$ there are no elements of zero norm and so no isotropic lines.
The rest of the analysis is as in the $q=1\mod 4$ case
\end{proof}

\begin{proof}[Proof of Lemma \ref{th:circles}]
Again we may translate and assume $\uu=0$. Any point $\vv \in C_0(0)$ satisfies
$\|\vv\|=0$. When $q = 1\mod 4$ all such $\vv$ are of the form $(v,\pm iv)$ so
that there are $2q-1$ or them. When $q=3\mod 4$ only $\vv=0$ is possible. Thus we are left with
$r\neq 0$. In this case fix any $\vv \in C_r(0)$. For any other $\ww\in C_r(0)$
there is a unique rotation about $0$ taking $\vv$ to $\ww$. When $q=1\mod 4$
there are $q-1$ such rotations and when $q=3\mod 4$ there are $q+1$ such
rotations. The second claim of the theorem is immediate.
\end{proof}

\begin{applemma}Any non-trivial rotation can be decomposed into a pair of
reflections:
\begin{enumerate}
  \item when $q=1\mod 4$ there are $q-1$ decompositions;
  \item when $q=3\mod 4$ there are $q+1$ decompositions.
\end{enumerate}
Any translation by $\dd$ with $\|\dd\|\neq 0$ can be decomposed into
a pair of reflections in $q$ ways.
A non-trivial translation by an isotropic vector cannot be decomposed into a pair of reflections.
\end{applemma}
\begin{proof}
Suppose we have a rotation $\cR=\cR_{R,\uu}$.
For any reflection matrix $S$, $RS^{-1}$ is also a reflection matrix. Then
$\cR=\cS_{RS^{-1},\uu}\circ\cS_{S,\uu}$, and since $S$ could be any of the
reflection matrices, there are at least $q-1$ such decompositions when $q=1\mod 4$ and $q+1$ when $q=3\mod 4$. We now prove that this accounts for all such decompositions. If
$\cR=\cS_1\circ \cS_2$ then $\cS_1$ and $\cS_2$ are non-parallel for otherwise their composition would be a
translation. The reflections $\cS_1$ and $\cS_2$ have fixed lines $l_1$ and $l_2$ which are
non-parallel and so intersect at a point $\vv$. This point is fixed by $\cR$
and is uniquely so since $\cR$ is non-trivial, that is $\vv=\uu$. The reflection
matrices $S_1$ of $\cS_1$ and $S_2$ of $\cS_2$ then have to satisfy
$S_2=RS_1^{-1}$ as required.

Now suppose we have a translation by a non-isotropic element $\dd$. Let $S$ be
the reflection matrix such that
$S\dd=-\dd$. Then if $t\in \FF_q$, the composition of \[\cS_1(\vv)=
S\vv-S(-t\dd)+(-t\dd)\] and
\[\cS_2(\vv)=S\vv-S\left(\left(\frac{1}{2}-t\right)\dd\right)+\left(\frac{1}{2}-t\right)\dd\]
is translation by $\dd$. This gives us $q$ distinct decompositions; indeed the
fixed line of the reflection by $\cS_1$ is orthogonal to $\dd$ and passes
through $-t\dd$ and is therefore distinct for distinct values of $t$.
Now, suppose $\cS_1$ and $\cS_2$ are two reflections which compose to
translation by $\dd$. Then $\cS_1$ and $\cS_2$ are parallel with common
reflection matrix $S$ about points $\uu_1$ and $\uu_2$ respectively. Since
\[\vv+\dd=\cS_2\circ\cS_1(\vv)=S(S\vv-S\uu_1+\uu_1)-S\uu_2+\uu_2=\vv+S(\uu_1-\uu_2)-(\uu_1-\uu_2)\]
we see that $S\dd=-\dd$ and so the decomposition is of the form we described.

Now, we observe that all pairs of reflections are now accounted for, and hence
there is no way to decompose translation by a non-zero isotropic vector into a
pair of reflections.
From Lemma \ref{th:reflections}, it is clear that (in the $q = 1 \mod 4$ case)
there are in total $(q-1)q$ reflections, $(q-2)q^2$ non-identity rotations, and
$(q-1)^2$ translations by a non-zero distance.
Hence, there are $(q-1)^2q^2$ pairs of reflections, of which $(q-2)q^2(q-1)$ are
non-identity rotations, $(q-1)^2 q$ are translations by a non-zero distance, and
$(q-1)q$ are the identity.
We have that $(q-1)^2q^2 = (q-2)q^2(q-1) + (q-1)^2q + (q-1)q$.
\end{proof}

\begin{proof}[Proof of Lemma \ref{th:rotationsFromReflections2}]
Suppose first that $\xx-\yy=\zz-\ww$. Then $\xx\neq \zz$ for otherwise
$\yy=\ww$. Thus translation by $\dd=\zz-\xx$ is the unique translation $\xx$ to
$\zz$ and $\yy$ to $\ww$. This translation can be decomposed in $q$ ways if $\dd$ is not isotropic, and $0$ ways if $\dd$ is isotropic. There
are no other pairs of reflections with the desired property.  Indeed, by Lemma
\ref{th:composition} and Lemma \ref{th:QuadruplesAndRotations}, if two
reflections have the desired property, then the composition of these two 
reflections must be a translation, and so it must be the unique translation 
taking $\xx$ to $\zz$ and $\yy$ to $\ww$.

If $\xx-\yy\neq\zz-\ww$ then there is a unique rotation taking $\xx$ to $\zz$
and $\yy$ to $\ww$. This can be decomposed in $q-1$ ways when $q=1\mod 4$ and
in $q+1$ ways when $q=3\mod 4$.
Similarly to the above, by Lemma \ref{th:composition} and Lemma
\ref{th:QuadruplesAndRotations}, there are no other pairs of reflections with
the desired property.
\end{proof}

\section{Proof of the $L^2$ Expander Mixing Lemma} \label{app2}

Here we give a proof of our weighted Expander Mixing Lemma, which is essentially
the same as the standard proof.

\begin{proof}[Proof of Lemma \ref{th:expanderMixing}]
Write \[f=\sum_{e}\langle f,e\rangle e\] and \[Ag=\sum_{e}\langle Ag,e\rangle
e=\sum_{e}\lambda_e\langle g,e\rangle
e\] where the summation is over the eigenfunctions $e$ of $A$ with eigenvalues
$\lambda_e$.
Via Lemma \ref{Fourier} we have \[\langle f,Ag\rangle =\sum_e
\lambda_e\langle f,e\rangle\overline{\langle
g,e\rangle}=\delta n\EE(f)\EE(g)+\sum_{e\neq e_1} \lambda_e\langle
f,e\rangle\overline{\langle g,e\rangle}.\] Here we have extracted the
contribution from the constant function $e_1(v)=1/\sqrt n$. Since $|\lambda_e|\leq \lambda$
for each $e\neq e_1$, after an application of Cauchy-Schwarz and Lemma \ref{Fourier} we see 
\begin{align*}
\left|\langle f,Ag\rangle-\delta n\EE(f)\EE(g)\right|&\leq \lambda\sum_{e\neq
e_1}|\langle f,e\rangle||\langle g,e\rangle|\\
&\leq \lambda\left(\sum_e |\langle
f,e\rangle|^2\right)^\frac{1}{2}\left(\sum_e |\langle
f,e\rangle|^2\right)^\frac{1}{2}\\
&=\lambda\|f\|\|g\|.
\end{align*} The second claim follows by taking $f$ and $g$ to be the
characteristic functions of $S$ and $T$ respectively.
\end{proof}


\begin{thebibliography}{HD}

\bibitem{Alon} N. Alon `Eigenvalues, geometric expanders, sorting in rounds, and Ramsey theory', \textit{Combinatorica } \textbf{6} (1986), no. 3, 207-219.

\bibitem{mixing} N. Alon, F. Chung, `Explicit construction of linear sized tolerant networks,' \textit{Discrete Mathematics,} 72(1):15-19, 1988.

\bibitem{Beck} J. Beck, `On the lattice property of the plane and some problems of Dirac, Motzkin and Erd\H{o}s in combinatorial geometry', \textit{Combinatorica } \textbf{3} (1983), no. 3-4, 281--297.

\bibitem{BHIPR} M. Bennett, D. Hart, A. Iosevich, J. Pakianathan and M. Rudnev `Group actions and geometric combinatorics in $\F_q^d$', \textit{arXiv:1311.4788}, (2013).

\bibitem{Brualdi} R. Brualdi, S. Mellendorf, `Regions in the complex plane containing the eigenvalues of a matrix,' \textit{American Mathematical Monthly}, (1994), 975--985.

\bibitem{Ch} J. Chapman, M. Erdo\u{g}an, D. Hart, A. Iosevich and D.Koh, `Pinned distance sets, k-simplices, Wolff's exponent in finite fields and sum-product estimates', \textit{Math. Z.} \textbf{271} (2012), no. 1-2, 63--93.

\bibitem{GK} L. Guth and N. H. Katz, `On the Erd\H{o}s distinct distance problem in the plane', \textit{arXiv:1011.4105}, (2010).

\bibitem{HR} H. Helfgott and M. Rudnev, `An explicit incidence theorem in $\mathbb{F}_p$', \textit{Mathematika} \textbf{57} (2011), no. 1, 135-145.

\bibitem{IR} A. Iosevich and M. Rudnev, `Erd\H{o}s distance problem in vector spaces over finite fields', \textit{Trans. Amer. Math. Soc.} \textbf{359} (2007), no. 12, 6127-6142.

\bibitem{Jones} T. G. F. Jones, `Further improvements to incidence and Beck-type bounds over prime finite fields', \textit{arXiv:1206.4517}, (2012).

\bibitem{LS} B. Lund and S. Saraf, `Incidence bounds for block designs', \textit{arXiv:1407.7513}, (2014).

\bibitem{LSZ14} B. Lund, A. Sheffer, and F. de Zeeuw, `Bisector energy and few distinct distances', \textit{arXiv:1411.6868}, (2014).

\bibitem{PT} J. Pach and G. Tardos, `Isosceles triangles determined by a planar point set', \textit{Graphs Combin.} \textbf{58} (2002), no. 4, 769-779.

\bibitem{Roman} S. Roman, \textit{Advanced linear algebra}. Springer, 2007.

\bibitem{solymosi} J. Solymosi, `Incidences and the spectra of graphs', \textit{Combinatorial Number Theory and Additive Group Theory}, Springer, 2009, 299--314.

\bibitem{Vinh} L. A. Vinh,
`The Szemer\'edi-Trotter type theorem and the sum-product estimate in finite fields',
{\it European J. Combin.} {\bf 32} (2011), no. 8, 1177-1181.

\end{thebibliography}
\end{document}